%% file: Main.tex
\documentclass[a4paper, reqno]{amsart}

\usepackage{Settings}

\title{Singularity Categories of Simple Singularities \\ in Positive Characteristic}

\author{Yuta Takashima}
\address{(Yuta TAKASHIMA) Department of Mathematical Sciences, Graduate School of Science,
Tokyo Metropolitan University, 1-1 Minamiosawa, Hachioji, Tokyo 192-0393, Japan.}
\email{yuta.takashima.m@gmail.com}

\subjclass[2020]{
    Primary~14F08; 
    Secondary~14B05, 
    13D03. 
}

\keywords{
    singularity category,
    Hochschild cohomology,
    Tyurina algebra,
    simple singularity,
    rational double point,
    standard category.
}
\begin{document}

\begin{abstract}
    We study the singularity categories of simple singularities of the same dimension
    over an algebraically closed field of positive characteristic, and show that, as in characteristic zero,
    these categories are not equivalent as triangulated categories unless the underlying singularities are
    analytically isomorphic.
    In contrast to the characteristic zero case, simple singularities in positive characteristic
    cannot be distinguished solely from the Auslander--Reiten quivers of their singularity categories.
    To address this, we extend to positive characteristics a theorem by Hua and Keller,
    which asserts that the 0th Hochschild cohomology of the dg singularity category of
    an isolated hypersurface singularity in characteristic zero is isomorphic to the Tyurina algebra
    of the defining polynomial.
    Furthermore as an application, we determine the condition for the singularity category of
    a rational double point (i.e., a simple singularity of dimension two) to be standard.
    We prove that such a category is standard if and only if the defining polynomial is weighted homogeneous.
\end{abstract}

\maketitle

\setcounter{tocdepth}{2}
\tableofcontents

\input{materials/Intro}
\input{materials/Prelim}
\input{materials/MainResult}
\input{materials/App}

\bibliographystyle{alphaurl}
\bibliography{References}
\end{document}

%% file: materials/Intro.tex
\section{Introduction}
Let $A$ and $A'$ be complete Noetherian local algebras over an algebraically closed field $k$.
It is obvious that if $A \cong A'$ then their singularity categories
$\DD_{sg}(A)$ and $\DD_{sg}(A')$ are $k$-linear triangulated equivalent.
Conversely, does a $k$-linear triangulated equivalence $\DD_{sg}(A) \simeq \DD_{sg}(A')$
induce an isomorphism $A \cong A'$?
The answer is no in general because of Kn\"{o}rrer's periodicity (\cite{MR877010} and \cite{MR0977477}).
In this paper, focusing on simple singularities of the same dimension,
we study whether each singularity category determines its underlying singularity.
Representation-theoretically, a \emph{simple singularity} is characterized as an isolated hypersurface singularity
$k\exdbra{x_0, x_1, \ldots, x_d} / \exgen{f}$ with $f \in k[x_0, x_1, \ldots, x_d]$
whose singularity category has only finitely many indecomposable objects up to isomorphism (\cite{MR1033443}). Set
\begin{align*}
    &\mathsf{Sing} \coloneqq \exset{\text{
        simple singularities over $k$ of dimension $d$
    }} / \mathord{\cong}, \\
    &\mathsf{Cat} \coloneqq \inset{\DD_{sg}(R)}{R \in \mathsf{Sing}} / \text{($k$-linear triangulated equivalence)}, \\
    &\mathsf{AR} \coloneqq \inset{(\varGamma, \tau)}{\text{
        $(\varGamma, \tau)$ is the Auslander--Reiten quiver of some $\D \in \mathsf{Cat}$
    }} / \mathord{\cong}.
\end{align*}
Then in characteristic $0$, since the composition
\begin{equation}
    \label{eq:Intro}
    \begin{array}{ccccc}
        \mathsf{Sing} & \twoheadrightarrow & \mathsf{Cat} & \twoheadrightarrow & \mathsf{AR} \\
        R & \mapsto & \DD_{sg}(R) & & \\
        & & \D & \mapsto & \text{(the Auslander--Reiten quiver of $\D$)}
    \end{array}
\end{equation}
is an injection (\cite{MR825715} and \cite{MR0887498}),
the mapping $\mathsf{Sing} \twoheadrightarrow \mathsf{Cat}$ in \cref{eq:Intro} is a bijection.
This means that the singularity categories reflect the differences in the analytic isomorphism classes
of simple singularities.
What happens in positive characteristic? The following is our first main result.

\begin{thm}[{\cref{thm:LongWork}}]
    \label{thm:FirstMain}
    As in characteristic $0$, the mapping $\mathsf{Sing} \twoheadrightarrow \mathsf{Cat}$ in \cref{eq:Intro}
    is a bijection in positive characteristic\textup{:}
    for simple singularities $R$ and $R'$ over $k$ of dimension $d$, they are isomorphic
    if and only if their singularity categories $\DD_{sg}(R)$ and $\DD_{sg}(R')$
    are $k$-linear triangulated equivalent.
\end{thm}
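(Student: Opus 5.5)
The direction ``isomorphic $\Rightarrow$ equivalent'' is trivial, so I will concentrate on the converse. My plan uses two invariants of $\DD_{sg}(R)$ that are preserved by $k$-linear triangulated equivalences.

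The first invariant is the Auslander--Reiten quiver. I will use the classification of simple singularities in positive characteristic due to Greuel--Kröning. It lists, for each $p$, the normal forms $A_n$, $D_n^r$, $E_6^r$, $E_7^r$, $E_8^r$, and so on. In small characteristic several analytically distinct normal forms share the same ADE type. I will also use the fact that the Auslander--Reiten quiver of $\DD_{sg}(R)$ depends only on the ADE type and the dimension parity, as in characteristic $0$; this follows because the matrix factorizations are determined combinatorially, via Knörrer periodicity and the McKay-type description. So an equivalence $\DD_{sg}(R)\simeq\DD_{sg}(R')$ forces $R$ and $R'$ to have the same ADE type. This reduces the problem to distinguishing normal forms of the same type, for instance $E_8^0,\dots,E_8^4$ in characteristic $2$, or $E_8^0,E_8^1,E_8^2$ in characteristic $3$.

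The second invariant is a finer one, namely the $0$th Hochschild cohomology of the dg enhancement. The key step is the positive-characteristic analogue of Hua--Keller: for an isolated hypersurface singularity $R=k\exdbra{x_0,\ldots,x_d}/\exgen{f}$, we have $HH^0(\DD_{sg}^{dg}(R))\cong T_f=k\exdbra{x}/(f,\partial_0 f,\ldots,\partial_d f)$. To make this usable I must also show that a $k$-linear triangulated equivalence between the singularity categories of simple singularities lifts to a dg (quasi-)equivalence. I would obtain this from uniqueness of enhancements for these categories, which are algebraic with finitely many indecomposables, or from the theorem of Chen--Li--Zhang, or Muro's results on finite triangulated categories. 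Alternatively, I could bypass the lifting by computing the centre of the triangulated category directly.

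For the Hua--Keller analogue I would proceed as follows. Identify $\DD_{sg}^{dg}(R)$ with the dg category of matrix factorizations of $f$. Compute its Hochschild cohomology as that of the curved algebra $(k\exdbra{x},f)$ via a Koszul-type resolution. This gives the $\mathbb{Z}/2$-graded complex $(k\exdbra{x}[\partial_0,\ldots,\partial_d]$-type Koszul complex$,\,\iota_{df}+f)$. Its even cohomology in degree $0$ is $k\exdbra{x}/(f,\partial f)$, provided $\partial_0 f,\ldots,\partial_d f$ together with $f$ behave well; this is where characteristic matters. In characteristic $0$, $f$ lies in the Jacobian ideal up to completion issues, but in characteristic $p$ this fails, and it is exactly why the Tyurina algebra rather than the Milnor algebra appears.

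The main obstacle is to justify the Koszul computation when $(\partial_i f)$ is not a regular sequence. This happens, for example, when $p\mid$ a degree, so the Milnor algebra can be infinite-dimensional. My first approach would be to replace $\partial f$ by the sequence $(f,\partial_0 f,\ldots,\partial_d f)$ and use that isolatedness makes the Tyurina ideal $\mathfrak m$-primary. I would then use a spectral sequence from the $x$-adic filtration, whose degree-$0$ part collapses to $T_f$.

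Finally, I would compare Tyurina algebras among normal forms of the same ADE type. In most cases their dimensions (Tyurina numbers) already differ. Where dimensions coincide, I would compare finer algebra invariants such as the socle, or whether $T_f$ is isomorphic to the Milnor algebra. I would do this by case-by-case computation from the Greuel--Kröning tables.
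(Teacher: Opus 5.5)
Your outer architecture matches the paper's. The Auslander--Reiten quiver fixes the type $T_n$; these quivers should simply be quoted as in \cref{thm:AR}, since a McKay-type derivation is not available in general in characteristic $p$. Uniqueness of dg enhancements (\cref{thm:TriToHH}) transports $\HH^0$. The Tyurina numbers in \cref{tab:Eq1,tab:Eq2} already separate all normal forms that share an Auslander--Reiten quiver, so your fallback to socles is never needed.

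The gap is in the one genuinely new step, $\HH^0(\DD_{sg}^{dg}(R))\cong T_f$ in characteristic $p$. The $\ZZ/2$-graded Koszul-type computation for the curved algebra $(k\exdbra{x},f)$ that you describe produces the Koszul complex on $\partial_0 f,\ldots,\partial_d f$ alone. In characteristic $0$ this is the computation that yields the Milnor algebra. The element $f$ itself never enters, and ``$\iota_{df}+f$'' is not a differential: it is not homogeneous of odd degree and does not square to zero. In characteristic $p$ that complex need not even have finite-dimensional cohomology. For $E_8^0$ in characteristic $2$ one has $\partial f=(x^2,y^4,0)$, while $T_f$ has dimension $16$, and no $x$-adic spectral sequence turns the former into the latter. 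Your diagnosis is also off. In characteristic $0$, $f$ lies in the Jacobian ideal only for quasi-homogeneous $f$ (Saito), and Hua--Keller already obtain $T_f$ rather than the Milnor algebra there. What actually fails in characteristic $p$ is regularity of the sequence $(\partial_i f)$ (\cref{rem:HK}).

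What you are missing is the mechanism by which $f$ enters: the $\ZZ$-graded Hochschild cohomology of the dg singularity category. The paper reaches it through the hypersurface ring, not through curved algebras:
\begin{itemize}
\item Replace $k\exdbra{x}/\exgen{f}$ by $S_{m_S}/\exgen{f}$; the two dg singularity categories are Morita equivalent.
\item By $2$-periodicity, $\HH^0\cong\HH^{2i}$.
\item By \cite[Theorem~1]{MR3907577}, $\HH^{2i}(\DD_{sg}^{dg}(S_{m_S}/\exgen{f}))\cong\HH[sg]^{2i}(S_{m_S}/\exgen{f})$.
\item By \cite[Corollary~5.4]{MR4817470}, this equals $\HH^{2i}(S_{m_S}/\exgen{f})$ for $i\gg 0$.
\item By \cite[Theorem~3.2.7]{MR1176152}, $\HH^{2i}(S/\exgen{f})\cong\bigoplus_j H_{2j-2i}(K_\bullet(S/\exgen{f};\partial f))$. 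Since $f$ is a nonzerodivisor, these homologies agree with those of $K_\bullet(S;f,\partial f)$.
\end{itemize}
Only at this point does your correct observation that $(f,\partial f)$ is $m_S$-primary do its work, and it does so through depth sensitivity, not a filtration argument. A Koszul complex on $d+2$ elements generating an ideal of height $d+1$ in $S$ has no homology above degree $1$ (\cref{lem:Koszul}). Hence the summands $H_2,H_4,\ldots$ vanish and $\HH^{2i}\cong H_0=T_f$. So the statement you must prove is the vanishing of the higher even Koszul homology, not a collapse of a ``degree-$0$ part''.
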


In contrast to the characteristic $0$ case, the composition \cref{eq:Intro}
fails to be an injection in positive characteristic in general (\cite{MR818299} and \cite{MR0887498}),
which implies that simple singularities in positive characteristic cannot be distinguished solely
from the Auslander--Reiten quivers of their singularity categories.
In order to prove \cref{thm:FirstMain}, we extend the next theorem
to the case of positive characteristic (see \cref{rem:HK}).

\begin{thm}[{\cite[Theorem~5.9]{MR4817470}}]
    Assume that the characteristic of $k$ is zero and \linebreak $\Spec \expar{k[x_0, x_1, \ldots, x_d] / \exgen{f}}$
    has only one singular point at the origin. Then
    the $0$th Hochschild cohomology of the dg singularity category
    $\DD_{sg}^{dg}(k\exdbra{x_0, x_1, \ldots, x_d} / \exgen{f})$ is isomorphic to
    the Tyurina algebra of the polynomial $f$ as $k\exdbra{x_0, x_1, \ldots, x_d} / \exgen{f}$-algebras\textup{:}
    \[
        \HH^0(\DD_{sg}^{dg}(k\exdbra{x_0, x_1, \ldots, x_d} / \exgen{f})) \cong
        k\exdbra{x_0, x_1, \ldots, x_d} \bigg/ \exgen{f, \frac{\partial f}{\partial x_0}, \frac{\partial f}{\partial x_1}, \ldots, \frac{\partial f}{\partial x_d}}.
    \]
\end{thm}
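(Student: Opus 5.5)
The plan is to follow the route of Hua and Keller, in three steps: pass to singular Hochschild cohomology, compute it with an explicit complete resolution, and check that the result is an isomorphism of algebras.

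\textbf{Step 1: reduction to Tate--Hochschild cohomology.} Put $S = k\exdbra{x_0, \ldots, x_d}$ and $R = S/\exgen{f}$. By Keller's comparison theorem, the Hochschild cohomology of the dg singularity category $\DD_{sg}^{dg}(R)$ is isomorphic, as a graded algebra, to the singular (Tate--)Hochschild cohomology $\HH_{sg}^*(R)$. This is defined as morphisms from $R$ to its shifts in the singularity category of the enveloping algebra $R^e$. In the complete setting one should work with the completed enveloping algebra $R \widehat{\otimes}_k R$. After this step it suffices to show
\[
  \HH^0_{sg}(R) \;=\; \underline{\operatorname{Hom}}_{R^e}(R, R) \;\cong\; S \big/ \exgen{f, \tfrac{\partial f}{\partial x_0}, \ldots, \tfrac{\partial f}{\partial x_d}},
\]
where the middle term is stable $\operatorname{Hom}$ modulo maps factoring through projectives. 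The identification holds in high degrees, and in all degrees once we know $R$ has a complete resolution.

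\textbf{Step 2: an explicit complete resolution of $R$ over $R^e$.} Write $y_i = x_i \otimes 1$ and $z_i = 1 \otimes x_i$. Then $R^e$ is a quotient of $P = k\exdbra{y,z}$ by $f(y)$ and $f(z)$, and $R = R^e/\exgen{y_i - z_i}$. Choose divided differences $\Delta_i(f) \in P$ with
\[
  f(y) - f(z) = \sum_i (y_i - z_i)\,\Delta_i(f).
\]
The Koszul complex on the regular sequence $(y_i - z_i)$, deformed by the $\Delta_i(f)$, is the standard BACH / Buchweitz--Roberts construction. It gives an $R^e$-free resolution of $R$ which becomes $2$-periodic after $d+1$ steps. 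Splicing it with its dual produces a complete resolution. Applying $\operatorname{Hom}_{R^e}(-, R)$ sets $y = z = x$, so each $y_i - z_i$ acts by $0$ and each $\Delta_i(f)$ acts by $\partial f / \partial x_i$. In the periodic range the differentials therefore become the maps
\[
  R \xrightarrow{(\partial_i f)} R^{d+1} \quad\text{and}\quad R^{d+1} \xrightarrow{0} R,
\]
together with exterior-algebra versions of these. The resulting cohomology is $\HH^{2n}_{sg}(R) \cong R/\exgen{\partial_i f} = S/\exgen{f, \partial_i f}$, which is the Tyurina algebra $T_f$.

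\textbf{Step 3: algebra structure and the main obstacle.} The isolated singularity hypothesis enters in two places. It makes $T_f$ finite-dimensional, and it makes the Koszul part of the complex exact except at the end (the $\partial_i f$ form a system of parameters modulo $f$), so the computation is honest in degree $0$ and not only for large degrees. To get an isomorphism of $R$-algebras, I would use the chain of maps
\[
  R \to \HH^0(R) = Z(R) = R \to \HH^0_{sg}(R).
\]
This map is surjective by the computation in Step 2 and has kernel $\exgen{\partial_i f}$, so it induces $T_f \cong \HH^0_{sg}(R)$ as algebras.

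I expect the main obstacle to be Step 1 in the complete setting. Keller's theorem is formulated for algebras whose enveloping algebra behaves well, while $R \otimes_k R$ is not Noetherian. One must show that replacing it by $R \widehat{\otimes}_k R$ changes neither the dg singularity category side nor the Tate--Hochschild side. I would first try to handle this by reducing to the polynomial (or localized) algebra $k[x]/\exgen{f}$, where the singular locus is the single point at the origin. There the comparison theorem applies directly, and the quotient $S/\exgen{f, \partial_i f}$ is supported only at the origin, so it is unchanged by completion.
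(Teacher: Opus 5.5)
Your outline is correct and has the same skeleton as the paper's argument, which proves the statement in every characteristic as \cref{prop:HHToTyu}. The shared ingredients are:
Keller's comparison theorem applied to a localization of $k[x_0,\ldots,x_d]/\langle f\rangle$; the BACH resolution \cite{MR1176152}; the vanishing of higher Koszul homology forced by the isolated singularity; and $R$-linearity to upgrade to an algebra isomorphism.

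The real difference is how you get from high degrees down to degree $0$. The paper combines three cited inputs: $\mathrm{HH}^i_{sg}\cong\mathrm{HH}^i$ for $i\gg0$ \cite[Corollary~5.4]{MR4817470}, the BACH formula for $\mathrm{HH}^{2i}(S/\langle f\rangle)$ with $2i\ge d+1$, and the $2$-periodicity of $\mathrm{HH}^{\bullet}(\DD_{sg}^{dg})$ coming from matrix factorizations (\cref{thm:HHPeriodicity}). You instead extend the periodic tail of the BACH resolution to the left, which gives a complete resolution over the complete intersection $R^e$. You then need neither the high-degree comparison nor the matrix-factorization periodicity: $\mathrm{HH}^{\bullet}_{sg}$ is computed in all degrees at once and is visibly $2$-periodic. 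Your route is more self-contained; the paper's is shorter given its citations. Like the paper, you never use that the $\partial_i f$ form a regular sequence, so characteristic zero plays no role.

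Several points need repair.

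First, the display $\mathrm{HH}^0_{sg}(R)=\underline{\operatorname{Hom}}_{R^e}(R,R)$ is false for $d\ge1$. The ideal $\langle y_i-z_i\rangle$ contains a non-zerodivisor of $R^e$, so $\operatorname{Hom}_{R^e}(R,R^e)=0$, nothing factors through a projective, and the stable Hom is all of $R$. The right object, and the one your construction actually computes, is Tate cohomology $\widehat{\operatorname{Ext}}{}^{0}_{R^e}(R,R)$. The complete resolution comes from continuing the periodic tail, whose cycles are maximal Cohen--Macaulay over the Gorenstein ring $R^e$. It does not come from splicing with a dual, which only makes sense for maximal Cohen--Macaulay modules.

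Second, in the periodic range the dual complex is $K_\bullet(R;\partial_0 f,\ldots,\partial_d f)$ folded $2$-periodically. So even degrees give $\bigoplus_{m\ \mathrm{even}}H_m$, and you need $H_m=0$ for $m\ge2$. The reason is not that the $\partial_i f$ form a system of parameters; there are $d+1$ of them in a $d$-dimensional ring. The reason is depth sensitivity: they generate an $m_S$-primary ideal of the Cohen--Macaulay ring $R$. This is \cref{lem:Koszul}, and $H_1$ survives in odd degrees.

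Third, you never need $R\widehat{\otimes}_kR$. The missing input is that $\DD_{sg}(\widehat{S}/\langle f\rangle)$ is the idempotent completion of $\DD_{sg}(S_{m_S}/\langle f\rangle)$. Hence the two dg singularity categories are Morita equivalent and have the same Hochschild cohomology. The fact that $T_f$ is supported at the origin only handles the algebra side.

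Finally, surjectivity of $\phi\colon R\to H=\mathrm{HH}^0_{sg}(R)$ does not follow from an abstract module isomorphism $H\cong T_f$, but it is quick. $H$ is commutative and cyclic over $R$, say $H=\phi(R)g$. Then $1=\phi(r)g$ makes $g$ a unit, so $H=Hg^{-1}=\phi(R)$, and $\ker\phi=\operatorname{Ann}_R(T_f)$.
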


Roughly speaking, a category that can be recovered from its Auslander--Reiten quiver is said to be
\emph{standard} (see \cref{dfn:Std}).
As an application of \cref{thm:FirstMain}, we determine the condition for
the singularity category of a rational double point (e.g., a simple singularity of dimension $2$) to be standard,
which is our second main result.

\begin{thm}[{\cref{thm:RDPStd}}]
    \label{thm:SecondMain}
    Let $R \coloneqq k\exdbra{x_0, x_1, x_2} / \exgen{f}$ be a rational double point, and
    $T_n^r$ \textup{(}$T \in \exset{A, D, E}$, $n \ge 1$ and $r \ge 0$\textup{)} the type of $R$ \textup{(}see \cref{tab:Eq2}\textup{)}.
    Then the singularity category $\DD_{sg} (R)$ is standard if and only if $r = 0$.
\end{thm}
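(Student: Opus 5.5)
We prove the two directions separately, relying on \cref{thm:FirstMain} and the positive-characteristic version of the Hua--Keller theorem (see \cref{rem:HK}).

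\textbf{Standard when $r=0$.} If $r = 0$, the defining polynomial $f$ of type $T_n^0$ is weighted homogeneous. So $R$ is graded, and $\DD_{sg}(R)$ is equivalent to the stable category $\underline{\mathrm{CM}}(R)$ of maximal Cohen--Macaulay modules. For these rings the Auslander--Reiten quiver coincides with the one in characteristic zero, namely $\mathbb{Z}\Delta/\langle\tau\rangle$ for the corresponding Dynkin diagram $\Delta$. We then exhibit a $k$-linear triangulated equivalence between $\DD_{sg}(R)$ and the mesh category $k(\varGamma)$ of its Auslander--Reiten quiver, which is exactly the definition of standardness in \cref{dfn:Std}. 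The route is the classical one for $r = 0$. In dimension two, Auslander's correspondence identifies $\underline{\mathrm{CM}}(R)$ with the stable category attached to the preprojective algebra of $\Delta$ (equivalently, Amiot's description of finite triangulated categories). When $f$ is quasi-homogeneous these constructions are graded, so the standard argument applies verbatim in every characteristic in which the normal form with $r = 0$ is available. Alternatively, one can compare $\DD_{sg}(R)$ with the characteristic-free combinatorial model $\underline{\mathrm{CM}}(k\exdbra{x_0,x_1,x_2}^G)$, a quotient singularity, and use Riedtmann-type arguments: a finite triangulated category whose Auslander--Reiten quiver is $\mathbb{Z}\Delta/G$ admits a covering functor from $k(\mathbb{Z}\Delta)$ once a grading splits the obstruction.

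\textbf{Not standard when $r>0$.} Suppose, to the contrary, that $\DD_{sg}(R)$ is standard, so $\DD_{sg}(R) \simeq k(\varGamma)$ with $\varGamma$ its Auslander--Reiten quiver. The rational double points $T_n^0$ and $T_n^r$ have isomorphic Auslander--Reiten quivers; this is precisely the failure of injectivity of \cref{eq:Intro}. Since $T_n^0$ is standard by the first part, we would get
\[
\DD_{sg}(R) \simeq k(\varGamma) \simeq \DD_{sg}(R_0),
\]
where $R_0$ has type $T_n^0$. By \cref{thm:FirstMain} this forces $R \cong R_0$, contradicting $r \neq 0$, since the normal forms in \cref{tab:Eq2} are pairwise non-isomorphic.

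There is one delicate point in this step. Standardness asserts an equivalence with the mesh category only as $k$-linear categories, not as triangulated categories, whereas \cref{thm:FirstMain} is stated for triangulated equivalences. I would handle this in one of two ways. The first option is to invoke uniqueness of triangulated structures on finite standard categories (Amiot's theorem that the triangulated structure on $k(\mathbb{Z}\Delta/G)$ is essentially unique). The better option is to bypass the issue: compare $0$th Hochschild cohomology, which is the center-type invariant seen through the dg enhancement. By Hua--Keller in positive characteristic, this invariant equals the Tyurina algebra, and the argument requires showing that the Tyurina algebra is recovered from the $k$-linear structure on the standard category. For weighted homogeneous $f$, the Tyurina number equals the Milnor number, whereas for $r > 0$ the Tyurina algebras differ in dimension (this can be read off from \cref{tab:Eq2}). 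I therefore expect the main obstacle to be exactly this transfer from a $k$-linear equivalence to an invariant of the triangulated or dg structure.
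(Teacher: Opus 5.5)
\textbf{Gap in the \emph{if} direction.} Your \emph{only if} half works once you commit to your first option, and it is then the paper's argument. The genuine gap is the \emph{if} half: none of the routes you list proves standardness in positive characteristic.

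Auslander's correspondence is what identifies the stable Auslander algebra with a preprojective algebra. It needs $R\cong k[[u,v]]^G$ with $|G|$ invertible in $k$. That fails exactly where the theorem has content, e.g.\ for the $D$ and $E$ types in characteristic $2$ or for $E_8^0$ in characteristic $5$, where the binary polyhedral group has order divisible by $p$. So nothing ``applies verbatim''. Amiot's structure theorem assumes standardness, so citing it here is circular. The quotient-singularity model $k[[x_0,x_1,x_2]]^G$ is neither characteristic-free nor of the right dimension. Finally, a Galois covering $k(\ZZ\Delta)\to\ind\DD_{sg}(R)$ with group generated by $\tau$ is essentially a reformulation of standardness, so asserting one exists ``once a grading splits the obstruction'' restates the goal.

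The grading is the right idea, but it has to do actual work, as in the paper:
\begin{itemize}
\item For $r=0$ one has $\sMCM^{\ZZ}(R)\simeq\DD^{b}(\mathrm{mod}\,k[Q])$, with Auslander--Reiten translation equal to the degree shift $(-1)$.
\item Completion induces an equivalence $\sMCM^{\ZZ}(R)/(-1)\simeq\sMCM(\widehat{R})$. It is essentially surjective because every maximal Cohen--Macaulay $\widehat{R}$-module is gradable. It is fully faithful because $\bigoplus_{n}\underline{\mathrm{Hom}}_R^{\ZZ}(M,N(-n))$ is finite-dimensional, hence equal to the product $\underline{\mathrm{Hom}}_R(M,N)^{\wedge}\cong\underline{\mathrm{Hom}}_{\widehat{R}}(\widehat{M},\widehat{N})$.
\item Thus $\DD_{sg}(\widehat{R})\simeq\DD^{b}(\mathrm{mod}\,k[Q])/\tau$.
\item Happel's $\ind\DD^{b}(\mathrm{mod}\,k[Q])\simeq k(\ZZ Q,\tau)$, Bongartz--Gabriel covering theory for $\ZZ Q\to\ZZ Q/\tau$, and a Krull--Schmidt lemma identifying $\ind(\mathcal{A})/\tau$ with $\ind(\mathcal{A}/\tau)$ then give $\ind\DD_{sg}(\widehat{R})\simeq k(\ZZ Q/\tau,\mathrm{id})$.
\end{itemize}

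\textbf{The \emph{only if} direction.} The Auslander--Reiten quiver depends only on $T_n$, not on $r$. So if $T_n^r$ and $T_n^0$ are both standard, both Auslander algebras are isomorphic to the mesh algebra $k[\varGamma,\tau]$. The recovery theorem the paper uses then upgrades this to a triangle equivalence: finite, standard, \emph{algebraic} triangulated categories are determined up to triangle equivalence by their Auslander algebra (cf.\ Amiot and Keller). After that, the first main theorem and the distinct Tyurina numbers in \cref{tab:Eq2} force $r=0$. You should state explicitly that singularity categories are algebraic, being stable categories of the Frobenius category of maximal Cohen--Macaulay modules, since this hypothesis is essential.

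Drop your ``better option''. $\mathrm{HH}^0$ of a dg enhancement is not an invariant of the underlying $k$-linear category, so it cannot be transported along a mere $k$-linear equivalence. Also, Tyurina and Milnor numbers of a weighted homogeneous $f$ need not agree in positive characteristic: $f=x^p+yz$ has $\partial f/\partial x=0$. That failure is exactly why the Hua--Keller theorem needed a new proof here.
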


In particular, it follows from \cref{thm:SecondMain} that all singularity categories of rational double points
are standard in characteristic 0, which has been known as a folklore result (e.g., \cite[Section~5]{MR0915168}).
At the same time, \cref{thm:SecondMain} shows that this property fails in characteristics $2$, $3$ or $5$.
Consequently, the assertion in \cite[Proposition~3.3]{MR1388043} that they are standard in arbitrary characteristic
turns out to be false.

\begin{cvn}
    \begin{items}
        \item $k$ denotes an algebraically closed field.
        \item Any functor between $k$-linear categories is assumed to be $k$-linear.
        \item Let $A$ be a ring.
        $\mod A$ stands for the category of finitely generated right $A$-modules,
        and $\proj A$ is a full subcategory of $\mod A$ consisting of the projective right $A$-modules.
        \item Let $S$ be a commutative ring.
        $K_{\bullet}(S;f_1, f_2, \ldots, f_n)$ denotes the Koszul complex of a sequence $f_1, f_2, \ldots, f_n \in S$.
        \item Let $\varGamma$ be a quiver.
        We think of paths as \emph{going from right to left}:
        a path $a_n a_{n-1} \cdots a_1$ corresponds to the concatenation
        \[
            \begin{tikzcd}[ampersand replacement=\&]
                \bullet \&
                    \bullet \ar{l}[']{a_n} \&
                        \ar{l}[']{a_{n-1}} \cdots \&
                            \bullet\rlap{.} \ar{l}[']{a_1}
            \end{tikzcd}
        \]
        $k(\varGamma)$ (resp. $k[\varGamma]$) denotes the path category
        (resp. the path algebra) of $\varGamma$. Here,
        \begin{itemize}
            \item the objects in $k(\varGamma)$ consist of the vertices in $\varGamma$;
            \item $\Hom_{k(\varGamma)} (M, N)$ is a $k$-vector space
            whose basis consists of the paths from an object $M$ to another $N$,
            and composition of morphisms is induced by concatenation of paths.
        \end{itemize}
        \item Let $(\varGamma, \tau)$ be a locally finite translation quiver.
        $k(\varGamma, \tau)$ (resp. $k[\varGamma, \tau]$) denotes the mesh category
        (resp. the mesh algebra) of $(\varGamma, \tau)$, which is the quotient of
        the path category (resp. the path algebra) of $\varGamma$ by the mesh ideal.
    \end{items}
\end{cvn}

\begin{ack}
    The author would like to express his gratitude to Professor Hokuto Uehara for suggesting the references
    \cite{MR4404125} and \cite{MR4817470}, and for sharing his email correspondence
    with Professors Zheng Hua and Bernhard Keller, whose comments are presented in \cref{rem:HK}.
\end{ack}

%% file: materials/Prelim.tex
\section{Simple Singularities}
In this section, we collect some preliminaries on simple singularities and their (dg) singularity categories.
Let $S_d \coloneqq k \exdbra{x_0, x_1, \ldots, x_d}$ with $d \geq 0$ be a formal power series ring,
and $f \in k[x_0, x_1, \ldots, x_d]$ a polynomial
such that $\Spec (S_d / \exgen{f})$ has only one singular point at the origin.

\begin{dfn}
    The \emph{Tyurina algebra} of the polynomial $f$ is defined by
    \[ T_f \coloneqq S_d \bigg/ \exgen{f, \frac{\partial f}{\partial x_0}, \frac{\partial f}{\partial x_1}, \ldots, \frac{\partial f}{\partial x_d}}. \]
    The rank $\tau(f)$ of $T_f$ over $k$ is referred to as the \emph{Tyurina number} of $f$.
\end{dfn}

\begin{rem}
    The Tyurina algebra $T_f$ is an invariant of the analytic class:
    if two isolated hypersurface singularities $S_d / \exgen{f}$ and $S_d / \exgen{g}$ are isomorphic,
    then $T_f \cong T_g$.
\end{rem}

\begin{dfn}
    The isolated hypersurface singularity $S_d / \exgen{f}$ is a \emph{simple singularity}
    if the number of indecomposable objects in the category $\MCM(S_d / \exgen{f})$
    of maximal Cohen--Macaulay $S_d / \exgen{f}$-modules is finite up to isomorphism.
\end{dfn}

\begin{eg}
    \label{eg:ZeroSimple}
    The isolated hypersurface singularities of dimension $0$ are exactly of the form
    \begin{align*}
        &k\exdbra{x_0} / \exgen{x_0^{n+1}}
            & &\text{for $n \geq 1$.}
    \end{align*}
    Since the indecomposable maximal Cohen--Macaulay $k\exdbra{x_0} / \exgen{x_0^{n+1}}$-modules consist of
    $k\exdbra{x_0} / \exgen{x_0}, \linebreak k\exdbra{x_0} / \exgen{x_0^2}, \ldots, k\exdbra{x_0} / \exgen{x_0^{n+1}}$
    for $n \geq 1$ (e.g., \cite[Theorem~3.3]{MR2919145}), any isolated hypersurface singularity of dimension $0$ is a simple singularity.
\end{eg}

\begin{thm}[{\cite{MR1033443}}]
    \label{thm:Classification}
    \begin{items}
        \item \label{thm:Classification:1} Let $x_0, x_1$ and $x_2$ also be denoted by $x, y$ and $z$ respectively.
        Assume that $S_d / \exgen{f}$ is a simple singularity of dimension $1$ \textup{(}resp. $2$\textup{)}.
        Then after a suitable change of variables, $f$ is one of the polynomials
        in \cref{tab:Eq1} \textup{(}resp. \cref{tab:Eq2}\textup{)}.
        \input{tables/Eq1}
        \input{tables/Eq2}
        \item \label{thm:Classification:2} Assume $d \geq 1$. Then the mapping
        \[
            \begin{array}{ccc}
                \exset{
                    \text{simple singularities of dimension $d$}
                } / \mathord{\cong}
                    & \to
                        & \exset{
                            \text{simple singularities of dimension $d + 2$}
                        } / \mathord{\cong} \\
                S_d / \exgen{g}
                    & \mapsto
                        & S_{d + 2} / \exgen{g + x_{d + 1} x_{d + 2}}
            \end{array}
        \]
        is a well-defined bijection.
    \end{items}
\end{thm}

\begin{rem}
    \begin{items}
        \item Some of the polynomials in \cref{tab:Eq2} differ from those in \cite{MR1033443}.
        Simple changes of variables show that both lists are essentially the same.
        For type $E_6^1$ in characteristic $3$, for instance, the polynomial
        $z^2 + x^3 + y^2z + xyz$ in \cref{tab:Eq2} transforms into $z^2 + x^3 + y^4 + x^2y^2$ in \cite{MR1033443}
        as follows:
        \begin{align*}
            &z^2 + x^3 + y^2z + xyz \\
            &= z^2 + (-x^2 + y_1^2)z + x^3 & &\text{$(y_1 \coloneqq -x + y)$} \\
            &= z_1^2 + x^3(1 - x) - y_1^4 - x^2y_1^2 & &\text{$(z_1 \coloneqq z + x^2 - y_1^2)$} \\
            &= z_1^2 + \dfrac{x_1^3}{(1 + x_1)^4} - y_1^4 - \dfrac{x_1^2y_1^2}{(1 + x_1)^2}
                & &\text{$\expar{x_1 \coloneqq \dfrac{x}{1 - x}}$} \\
            &= z_1^2 + \dfrac{x_1^3}{(1 + x_1)^4} - \dfrac{y_2^4}{(1 + x_1)^4} - \dfrac{x_1^2y_2^2}{(1 + x_1)^4}
                & &\text{$(y_2 \coloneqq (1 + x_1)y_1)$} \\
            &= -\dfrac{z_2^2}{(1 + x_1)^4} + \dfrac{x_1^3}{(1 + x_1)^4} - \dfrac{y_2^4}{(1 + x_1)^4} - \dfrac{x_1^2y_2^2}{(1 + x_1)^4}
                & &\text{$(z_2 \coloneqq \sqrt{-1}(1 + x_1)^2z_1)$} \\
            &= -\dfrac{1}{(1 - x_2)^4}(z_2^2 + x_2^3 + y_2^4 + x_2^2y_2^2)
                & &\text{$(x_2 \coloneqq -x_1)$.}
        \end{align*}
        \item The Tyurina numbers in \cref{tab:Eq1,tab:Eq2} are taken from
        \cite[pp.352--353]{MR1033443} and \cite[p.356]{MR4274595} respectively.
        \item The simple singularities of dimension $2$ coincide with the rational double points.
    \end{items}
\end{rem}

\begin{thm}[{\cite[Chapter~4]{MR4390795} and {\cite[Theorem~6.1]{MR570778}}}]
    The singularity category $\DD_{sg}(S_d / \exgen{f})$ is triangulated equivalent
    to the stable category $\sMCM(S_d / \exgen{f})$ of maximal Cohen--Macaulay $S_d / \exgen{f}$-modules
    and to the homotopy category $\HMF_{S_d} (f)$ of matrix factorizations of the polynomial $f$.
\end{thm}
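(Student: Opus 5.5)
The plan is to prove the two equivalences separately: first $\sMCM(S_d / \exgen{f}) \simeq \DD_{sg}(S_d / \exgen{f})$ (Buchweitz), then $\HMF_{S_d}(f) \simeq \sMCM(S_d / \exgen{f})$ (Eisenbud). Throughout I write $R \coloneqq S_d / \exgen{f}$. Since $R$ is a hypersurface in a regular local ring, it is a Gorenstein local ring of Krull dimension $d$, so $\MCM(R)$ is a Frobenius exact category whose projective-injective objects are exactly the free modules. Hence $\sMCM(R)$ is triangulated: the suspension is the cosyzygy $\Omega^{-1}$, and triangles come from short exact sequences in $\MCM(R)$.

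For the first equivalence, consider the composite $\MCM(R) \hookrightarrow \DD^b(\mod R) \to \DD_{sg}(R)$. It kills free modules, so it factors through a functor $F \colon \sMCM(R) \to \DD_{sg}(R)$. Each short exact sequence of MCM modules gives a triangle in $\DD^b(\mod R)$, so $F$ is exact once we identify $\Omega^{-1}M$ with $M[1]$ in $\DD_{sg}(R)$. This identification holds because $0 \to M \to P \to \Omega^{-1}M \to 0$ with $P$ free. For essential surjectivity, take $X \in \DD^b(\mod R)$ and replace it by a bounded-above complex of free modules. Truncating brutally far to the left gives a triangle relating $X$ to $N[n]$ up to a perfect complex, where $N$ is a high syzygy. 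For $n \ge d$ the module $N$ is MCM, by the depth lemma. Then $X \cong N[n] \cong F(\Omega^{-n}N)$ in $\DD_{sg}(R)$.

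Full faithfulness of $F$ is the step I expect to be the main obstacle. A morphism $M \to N$ in $\DD_{sg}(R)$ is a roof $M \leftarrow M' \to N$ whose cone is perfect, and one must show that every such roof is equivalent to a genuine module map, uniquely modulo maps factoring through free modules. The route I would take uses complete resolutions. Since $\Ext^i_R(M, R) = 0$ for $i > 0$ when $M$ is MCM over Gorenstein $R$, every $M$ has an acyclic complex of free modules $\mathbf{P}_M$ with $M$ as a syzygy. One then shows
\[
\Hom_{\DD_{sg}(R)}(M, N) \cong H^0 \Hom_R(\mathbf{P}_M, \mathbf{P}_N) \cong \underline{\Hom}_R(M, N).
\]
The key input is that $\Hom_{\DD^b}(M, P[i]) = 0$ for $i > 0$ and every perfect complex $P$. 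This follows from the Ext-vanishing above together with finite length of perfect complexes, and it lets one cancel perfect cones.

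For the second equivalence, define $\operatorname{coker} \colon \HMF_{S_d}(f) \to \sMCM(R)$ on objects by $(\varphi, \psi) \mapsto \operatorname{coker}\varphi$. Since $\varphi\psi = \psi\varphi = f$, $\varphi$ is injective and $f$ annihilates its cokernel. The depth of $\operatorname{coker}\varphi$ is then $d+1-1 = d$, so it is MCM over $R$. For density, let $M$ be MCM. Auslander--Buchsbaum over the regular ring $S_d$ gives $\operatorname{pd}_{S_d} M = 1$, hence a presentation $0 \to G \xrightarrow{\varphi} F \to M \to 0$ with $F, G$ free of equal rank. Multiplication by $f$ on $F$ lands in $\operatorname{im}\varphi$, which yields $\psi$ with $\varphi\psi = f$; then $\psi\varphi = f$ follows by injectivity of $\varphi$.

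On morphisms, a map of factorizations induces a map of cokernels. Any map of cokernels lifts to a map of factorizations, because the reduction modulo $f$ of a matrix factorization is the 2-periodic free resolution of the cokernel over $R$. Under this correspondence, null-homotopic maps match exactly the maps factoring through free $R$-modules; this is checked by lifting homotopies along the periodic resolution. Finally, the shift $(\varphi, \psi) \mapsto (-\psi, -\varphi)$ corresponds to $\Omega^{-1}$, since $\operatorname{coker}\psi \cong \Omega^{-1}_R(\operatorname{coker}\varphi)$. Mapping cones of matrix factorizations go to the standard triangles in $\sMCM(R)$, so $\operatorname{coker}$ is a triangulated equivalence.
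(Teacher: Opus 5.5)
Your overall plan follows the route of the sources the paper cites: Buchweitz for $\sMCM(R)\simeq\DD_{sg}(R)$ and Eisenbud for $\HMF_{S_d}(f)\simeq\sMCM(R)$. The paper gives no argument of its own. The matrix factorization half, the exactness of $F$ and its essential surjectivity are fine.

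The gap is in full faithfulness, which you rightly flag as the crux: the ``key input'' you propose is false. Perfect complexes are closed under shifts, so your claim that $\Hom_{\DD^{b}}(M,P[i])=0$ for all $i>0$ and all perfect $P$ amounts to $\Hom_{\DD^{b}}(M,P)=0$ for every perfect $P$. Already $P=R[-1]$, $i=1$ gives $\Hom_R(M,R)\neq 0$ for any nonzero MCM module $M$.

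Worse, if $M$ really were left orthogonal to all perfect complexes, Verdier localization would not change morphisms out of $M$. You would then get $\Hom_{\DD_{sg}(R)}(M,N)=\Hom_R(M,N)$ rather than $\underline{\Hom}_R(M,N)$, contradicting the theorem for $M=N=R$. Finite length of $P$ only gives $\Hom_{\DD^{b}}(M,P[i])=0$ for $i$ larger than the top nonzero degree of $P$. That bound depends on $P$, so perfect cones cannot be cancelled directly.

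The missing idea is to move $M$ into that range using its coresolution
$0\to M\to F^0\to\cdots\to F^{n-1}\to\Omega^{-n}M\to 0$. It exists because $M^*$ is MCM and $M$ is reflexive; it is also what builds your complete resolution. It gives a triangle
\[
\Omega^{-n}M[-n]\xrightarrow{\ j\ }M\xrightarrow{\ \iota\ }F_\bullet\to\Omega^{-n}M[-n+1]
\]
with $F_\bullet=(F^0\to\cdots\to F^{n-1})$ perfect.

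\emph{Fullness.} Take a roof $M\xleftarrow{s}M'\xrightarrow{g}N$ with triangle $M'\xrightarrow{s}M\xrightarrow{p}P\to$ and $P$ perfect. Choose $n\geq 1$ larger than the top degree of $P$. Then $p\circ j\in\Hom_{\DD^{b}}(\Omega^{-n}M,P[n])=0$, so $j=s\circ u$ for some $u$, and the roof is equivalent to $(j,\,g\circ u)$. By dimension shifting,
\[
g\circ u\in\operatorname{Ext}^n_R(\Omega^{-n}M,N)\cong\operatorname{Ext}^1_R(\Omega^{-1}M,N)\cong\underline{\Hom}_R(M,N),
\]
and the composite isomorphism is $h\mapsto h\circ j$. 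Hence $g\circ u=h\circ j$ for a module map $h$, and $g\circ s^{-1}=h$ in $\DD_{sg}(R)$.

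\emph{Faithfulness.} Suppose $g\colon M\to N$ factors in $\DD^{b}$ through a perfect $P$ via $a\colon M\to P$. For $n$ large, $a\circ j=0$, so $a$ factors through $\iota$. Every morphism $F_\bullet\to N$ is given by a map $F^0\to N$, so $g$ factors through the free module $F^0$.

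With this argument, the identification with $H^0\Hom_R(\mathbf{P}_M,\mathbf{P}_N)$ is unnecessary. As you state it, that identification is essentially Buchweitz's theorem itself rather than a step toward it.
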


\begin{rem}
    These categories are $\Hom$-finite $k$-linear Krull--Schmidt categories.
\end{rem}

\begin{thm}[{\cite{MR0977477}, cf. \cite{MR877010}}]
    \label{thm:Periodicity}
    As $k$-linear triangulated categories,
    \[ \DD_{sg} (S_d / \exgen{f}) \simeq \DD_{sg} (S_{d + 2} / \exgen{f + x_{d + 1} x_{d + 2}}). \]
\end{thm}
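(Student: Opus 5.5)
The statement to prove is Kn\"orrer periodicity: $\DD_{sg}(S_d/\exgen{f}) \simeq \DD_{sg}(S_{d+2}/\exgen{f + x_{d+1}x_{d+2}})$ as $k$-linear triangulated categories. I will work throughout with matrix factorizations, using the equivalence $\DD_{sg}(S_d/\exgen{f}) \simeq \HMF_{S_d}(f)$ recalled above. Set $g \coloneqq f + uv$ with $u \coloneqq x_{d+1}$ and $v \coloneqq x_{d+2}$. It then suffices to construct a triangulated functor
\[ \Phi \colon \HMF_{S_d}(f) \to \HMF_{S_{d+2}}(g) \]
and show that it is fully faithful and essentially surjective. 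The characteristic-free feature to exploit is that $uv$ is a nondegenerate quadratic form that needs no division by $2$. This is why one uses $x_{d+1}x_{d+2}$ rather than $x_{d+1}^2 + x_{d+2}^2$, and the argument should work verbatim in characteristic $2$.

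\textbf{Construction of $\Phi$.} Given a matrix factorization $(P_0 \xrightarrow{\varphi} P_1 \xrightarrow{\psi} P_0)$ of $f$ over $S_d$, extend scalars to $S_{d+2}$. Tensor with the Koszul-type factorization $(S_{d+2} \xrightarrow{u} S_{d+2} \xrightarrow{v} S_{d+2})$ of $uv$, using the usual graded tensor product of $\mathbb{Z}/2$-graded factorizations. This gives
\[
\Phi(\varphi,\psi) = \left( P_0 \oplus P_1 \xrightarrow{\begin{pmatrix} \varphi & -v \\ u & \psi \end{pmatrix}} P_1 \oplus P_0 \xrightarrow{\begin{pmatrix} \psi & v \\ -u & \varphi \end{pmatrix}} P_0 \oplus P_1 \right),
\]
which is a factorization of $f + uv$. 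The construction is functorial and additive, and it commutes with the shift $(\varphi,\psi) \mapsto (-\psi,-\varphi)$ and with mapping cones. Hence it descends to a triangulated functor on homotopy categories.

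\textbf{Full faithfulness.} Morphisms in $\HMF$ are the $0$th cohomology of the $\mathbb{Z}/2$-graded Hom complex. Write the Hom complex of $\Phi(X)$ and $\Phi(Y)$ as
\[ \Hom_{S_d}(X,Y) \otimes_{S_d} \End(K) \otimes S_{d+2}, \]
where $K$ is the factorization of $uv$. The endomorphism dg algebra of $K$ over $k\exdbra{u,v}$ is quasi-isomorphic to $k$. Indeed, $\End(K)$ is a $2\times 2$ matrix algebra with differential built from $u$ and $v$, and its cohomology is $k\exdbra{u,v}/\exgen{u,v} = k$ in degree $0$ and zero in degree $1$. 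This is the explicit computation I would carry out: $K$ corresponds to the MCM module $k\exdbra{u,v}/\exgen{u}$, which is the unique nonfree indecomposable for $A_1$ in dimension $0$, with stable endomorphism ring $k$. A Künneth/base-change argument over the flat extension $S_d \to S_{d+2}$ then gives $H^*\Hom(\Phi X, \Phi Y) \cong H^*\Hom(X,Y)$. The delicate point is that $S_{d+2}$ is the completed tensor product of $S_d$ and $k\exdbra{u,v}$. Since $H^*\Hom(X,Y)$ is a finite-length $S_d$-module (isolated singularity), completion is harmless. I would argue this with a spectral sequence, or with explicit homotopies that contract away $u$ and $v$.

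\textbf{Essential surjectivity.} This is the main obstacle. I would prove it by Kn\"orrer's original method. Let $M$ be an MCM $S_{d+2}/\exgen{g}$-module. Restrict to the hypersurface $v = 0$, which is $S_{d+1}/\exgen{f}$ with $S_{d+1} = S_d\exdbra{u}$. This restriction is related to the analogous functor for one variable, $f \mapsto f + u^2$ (with double cover $\sigma$), but I will avoid $u^2$ because of characteristic $2$. The direct route: given a factorization $(A,B)$ of $f + uv$, reduce modulo $u$ to obtain a factorization of $f$ over $S_d\exdbra{v}$, then reduce modulo $v$ to obtain a factorization $(\bar A, \bar B)$ of $f$ over $S_d$. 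The claim is then $\Phi(\bar A,\bar B) \cong (A,B) \oplus (A,B)[1]$ in $\HMF$. Combined with full faithfulness, the Krull--Schmidt property and idempotent completeness, this shows every indecomposable lies in the essential image, up to shift. I expect this claim to be the hard step, and I would check it first. If it resists, the fallback is an idempotent-completion argument: the image of $\Phi$ contains a set of generators, such as $\Phi$ applied to the residue-field stabilization, and is closed under summands. Since $\HMF_{S_{d+2}}(g)$ is classically generated by the stabilization of $k$, which is hit up to summand, essential surjectivity would follow.
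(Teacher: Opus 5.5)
The paper only cites \cite{MR0977477} (cf.\ \cite{MR877010}) for this statement, so your argument has to stand on its own. There is a genuine gap: essential surjectivity is never proved, and that is the real content of periodicity.

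Your construction of $\Phi$ and the full-faithfulness step are sound in substance. Cleanly put: $\End(K)$ is the $\ZZ/2$-folding of the Koszul complex $K_{\bullet}(k\exdbra{u,v};u,v)$. So $\Hom(\Phi X,\Phi Y)$ is the folded Koszul complex of $u,v$ on $\Hom(X,Y)\otimes_{S_d}S_{d+2}$. Since the Koszul direction is bounded and $u,v$ is regular on these free modules, a staircase argument shows $h\mapsto h\otimes\mathrm{id}_K$ is a quasi-isomorphism, with no completion issue. (A slip: $K$ corresponds to $k\exdbra{u,v}/\exgen{u}$ over $k\exdbra{u,v}/\exgen{uv}$. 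That is $A_1$ in dimension $1$, with two nonfree indecomposables, although your values $k$ and $0$ for the cohomology are right.)

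Your claim $\Phi(\bar A,\bar B)\cong(A,B)\oplus(A,B)[1]$ has no mechanism behind it. Passing to $(\bar A,\bar B)$ discards every term of $A,B$ of positive order in $u,v$. In general a factorization of $f$ over $S_d\exdbra{v}$ is not isomorphic to the constant extension of its specialization at $v=0$, so some input specific to the summand $uv$ is needed. The fallback only relocates the problem. That the stabilization of $k$ over $S_{d+2}/\exgen{g}$ lies, up to summands, in the image of $\Phi$ is exactly what must be shown, and nothing in the proposal relates any $\Phi(X)$ to $k$.

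The missing idea, which works in every characteristic, is that $v=\partial g/\partial u$ and $u=\partial g/\partial v$. Differentiating $AB=BA=g$ shows that $(\partial_u A,\partial_u B)$ is a null-homotopy of $v\cdot\mathrm{id}$, and likewise $u\cdot\mathrm{id}\simeq 0$, on \emph{every} factorization of $g$. This replaces Kn\"{o}rrer's use of $\partial(y^2)/\partial y=2y$, which is useless in characteristic $2$. Hence $u$ and $v$ act by zero on $\DD_{sg}(S_{d+2}/\exgen{g})$. For the MCM module $N$ attached to $(A,B)$, the exact sequences $0\to N\xrightarrow{u}N\to N/uN\to 0$ and $0\to N/uN\xrightarrow{v}N/uN\to N/(u,v)N\to 0$ then give $N/(u,v)N\cong N^{\oplus 2}\oplus N[1]^{\oplus 2}$ there.

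One computation remains. Let $M$ be an MCM $S_d/\exgen{f}$-module with factorization $X$. The module $M\widehat{\otimes}_k k\exdbra{v}$, with $u$ acting by $0$, has stabilization $\Phi(X)$ up to shift. You see this from its two-step resolution over $S_{d+2}$ built from $\varphi$ and $u$, with homotopies for $g$ built from $\psi$ and $v$. Apply this to $\bar N\coloneqq N/(u,v)N$, whose factorization is $(\bar A,\bar B)$. Using $0\to\bar N\widehat{\otimes}_k k\exdbra{v}\xrightarrow{v}\bar N\widehat{\otimes}_k k\exdbra{v}\to\bar N\to 0$ and $v=0$ once more, you get $N^{\oplus 2}\oplus N[1]^{\oplus 2}\cong\Phi(\bar A,\bar B)\oplus\Phi(\bar A,\bar B)[1]$. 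So $N$ is a summand of an object in the image. The image is closed under summands because $\Phi$ is fully faithful and $\HMF_{S_d}(f)$ is idempotent complete.

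With $\bar N=k$, these same two facts are what your fallback implicitly needs. Without them, the proposal establishes only a fully faithful embedding.
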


\begin{thm}[{\cite{MR825715}, \cite{MR818299} and \cite{MR0887498}}]
    \label{thm:AR}
    Assume that $S_d / \exgen{f}$ is a simple singularity of dimension $1$ \textup{(}resp. $2$\textup{)}.
    Then the Auslander--Reiten quiver of the singularity category $\DD_{sg} (S_d / \exgen{f})$
    is given in \cref{tab:AR1} \textup{(}resp. \cref{tab:AR2}\textup{)}.
    Here, the Auslander--Reiten translation is depicted by dashed arrows.
    \input{tables/AR1}
    \input{tables/AR2}
\end{thm}

\begin{thm}[{\cite[Theorem~2.49 (3)]{MR3877165}}]
    \label{thm:HHPeriodicity}
    The dg singularity category $\DD_{sg}^{dg}(S_d / \exgen{f})$ is quasi-equivalent
    to the dg category $\MF_{S_d}^{\mathrm{dg}} (f)$ of matrix factorizations of the polynomial $f$.
    In particular, the Hochschild cohomology $\HH^{\bullet}(\DD_{sg}^{dg}(S_d / \exgen{f}))$ is $2$-periodic\textup{:}
    As $S_d / \exgen{f}$-modules,
    \begin{align*}
        &\HH^n(\DD_{sg}^{dg}(S_d / \exgen{f})) \cong \HH^{n + 2}(\DD_{sg}^{dg}(S_d / \exgen{f}))
            & &\text{for any $n \in \ZZ$.}
    \end{align*}
\end{thm}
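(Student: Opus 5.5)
The statement has two parts, and I would prove them in order. First I would build a dg comparison functor realizing the known triangulated equivalence $\HMF_{S_d}(f) \simeq \DD_{sg}(S_d/\exgen{f})$ at the dg level. Then I would deduce periodicity from a structural feature of $\MF_{S_d}^{\mathrm{dg}}(f)$. Write $R \coloneqq S_d/\exgen{f}$.

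\emph{The quasi-equivalence.}
\begin{itemize}
    \item Take $\DD_{sg}^{dg}(R)$ to be the Drinfeld dg quotient of a dg enhancement of $\DD^b(\mod R)$ by the full dg subcategory of perfect complexes.
    \item Define a dg functor $\MF_{S_d}^{\mathrm{dg}}(f) \to \DD_{sg}^{dg}(R)$ by sending a matrix factorization $(P_0 \rightleftarrows P_1)$ to $\Coker(P_1 \to P_0)$, regarded as an $R$-module. To get a genuine dg functor rather than a functor on homotopy categories, I would use an explicit bimodule: for each matrix factorization $X$, the two-periodic unbounded complex $X \otimes_{S_d} R$ is an acyclic complex of free $R$-modules. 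Its brutal truncations give a resolution of the cokernel, and this yields a dg $\MF$--$\DD^b$ bimodule. Composing with the quotient functor gives a quasi-functor.
    \item On $H^0$, this quasi-functor is Buchweitz's equivalence composed with Eisenbud's equivalence $\HMF_{S_d}(f) \simeq \sMCM(R)$, both quoted above, so it is a quasi-equivalence.
\end{itemize}
I expect this step to be the main obstacle. One must check that the quasi-functor is compatible with the $\ZZ$-graded structure, even though $\MF$ is only naturally $\ZZ/2$-graded. The fix is to regard $\MF_{S_d}^{\mathrm{dg}}(f)$ as a $\ZZ$-graded dg category whose Hom complexes are $2$-periodic. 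One must also check that Drinfeld localization commutes with the relevant Hom computations; here the Hom-finiteness over the complete local ring helps.

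\emph{Periodicity.} Hochschild cohomology is invariant under quasi-equivalence, so it suffices to work with $\A \coloneqq \MF_{S_d}^{\mathrm{dg}}(f)$. By construction, $\A(X,Y)^{n+2} = \A(X,Y)^n$, and this identification commutes with both differential and composition. Hence there is an isomorphism of dg $\A$-bimodules $\A \cong \A[2]$: it is the identity on underlying data, and it introduces no signs, because the shift is by an even degree. Therefore
\[ \HH^n(\A) = \Hom_{\D(\A^{e})}(\A, \A[n]) \cong \Hom_{\D(\A^{e})}(\A, \A[n+2]) = \HH^{n+2}(\A). \]

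\emph{$R$-linearity.} It remains to check that this isomorphism respects the $R$-module structures. The $R$-action on $\HH^\bullet(\A)$ comes from the characteristic morphism $R \to \HH^0(\A)$, which acts by multiplication by elements of $S_d$ on matrix factorizations. These elements are central and of degree $0$, so they commute with the bimodule isomorphism $\A \cong \A[2]$. Hence the isomorphism $\HH^n \cong \HH^{n+2}$ is $R$-linear.
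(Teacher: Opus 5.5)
The paper does not prove this statement; it only quotes it from the literature. Your periodicity argument is correct and is the standard mechanism. Identifying $\A(X,Y)^n$ with $\A(X,Y)^{n+2}$ is multiplication by a central invertible element $u$ of degree $2$, so the unfolded $\A$ is linear over $k[u^{\pm 1}]$. This gives $\A\cong\A[2]$ as dg bimodules (with no signs, as you say), and hence $\HH^n\cong\HH^{n+2}$ compatibly with the action of $S_d$.

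\textbf{The gap is in the first half.} You assert that brutal truncations of $X\otimes_{S_d}R$ yield a dg $\MF$--$\DD^{b}$ bimodule, but they do not. We have $\sigma_{\le 0}P=P/\sigma_{\ge 1}P$, and a morphism of degree $n$ in the unfolded $\MF_{S_d}^{\mathrm{dg}}(f)$ sends $\sigma_{\ge 1}$ into $\sigma_{\ge 1+n}$. So it induces a map of truncations only when $n\ge 0$. The negative-degree part of the Hom complexes, including every homotopy, does not act, and no bimodule results.

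The obstacles you list are not the real ones. Unfolding to a $2$-periodic $\ZZ$-graded category is just the definition. No Hom-finiteness is needed, because over a field the Drinfeld quotient already has the Verdier quotient as its $H^0$.

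One repair is to set $B(M,X)\coloneqq\operatorname{colim}_{m\to-\infty}\Hom(M,\sigma_{\le m}(X\otimes_{S_d}R))$, with Homs taken in the Drinfeld quotient.
\begin{itemize}
\item A degree-$n$ morphism maps $\sigma_{\le m}$ to $\sigma_{\le m+n}$, so it acts on the colimit.
\item Each projection $\sigma_{\le m+1}\to\sigma_{\le m}$ has perfect kernel, so the transition maps are quasi-isomorphisms.
\item Hence $B(-,X)$ is quasi-represented by $\sigma_{\le 0}(X\otimes_{S_d}R)$, a free resolution of the cokernel.
\end{itemize}
Alternatively, map strictly into the dg category of acyclic complexes of free $R$-modules via $X\mapsto X\otimes_{S_d}R$, and use a dg version of Buchweitz's theorem. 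With either repair, your $H^0$ comparison via Eisenbud and Buchweitz, together with the fact that both sides are pretriangulated, gives the quasi-equivalence.

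\textbf{A point to state for the module structure.} For the claim ``as $S_d/\exgen{f}$-modules'', you should record that the comparison is $S_d$-linear. Then $\HH^\bullet(\DD_{sg}^{dg}(R))\cong\HH^\bullet(\A)$ intertwines the two actions. The fact that $f$ acts by zero on $\HH^\bullet(\A)$ comes from this comparison and is not visible on the matrix-factorization side alone. For example, in characteristic $0$ the $\ZZ/2$-graded Hochschild cohomology of matrix factorizations is the Milnor algebra, where $f$ need not vanish. Your commutation argument only uses $S_d$-linearity, so this is harmless once stated.
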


\begin{thm}[{\cite{MR4404125}}]
    \label{thm:TriToHH}
    Assume that $S_d / \exgen{f}$ is a simple singularity.
    Then the singularity category $\DD_{sg}(S_d / \exgen{f})$ admits a unique dg enhancement.
    In particular for simple singularities $S_d / \exgen{f}$ and $S_d / \exgen{g}$,
    if their singularity categories $\DD_{sg}(S_d / \exgen{f})$ and $\DD_{sg}(S_d / \exgen{g})$
    are triangulated equivalent, then as graded $k$-algebras,
    \[
        \HH^{\bullet}(\DD_{sg}^{dg}(S_d / \exgen{f})) \cong \HH^{\bullet}(\DD_{sg}^{dg}(S_d / \exgen{g})).
    \]
\end{thm}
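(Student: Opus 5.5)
The plan is to deduce the statement from Muro's uniqueness theorem for enhancements of finite triangulated categories \cite{MR4404125}. That theorem says, roughly: an algebraic, $\Hom$-finite, idempotent complete triangulated category over a perfect field with only finitely many indecomposable objects up to isomorphism has a unique dg enhancement up to quasi-equivalence. The work therefore has two parts: verify these hypotheses for $\D \coloneqq \DD_{sg}(S_d / \exgen{f})$ when $S_d / \exgen{f}$ is simple, and then pass from uniqueness of enhancements to the isomorphism of Hochschild cohomology.

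\emph{Step 1: hypotheses.} The field $k$ is algebraically closed, hence perfect. By the triangulated equivalence $\DD_{sg}(S_d/\exgen{f}) \simeq \sMCM(S_d/\exgen{f})$ recalled above, $\D$ is the stable category of the Frobenius exact category $\MCM(S_d/\exgen{f})$, so it is algebraic. Concretely, $\DD_{sg}^{dg}(S_d/\exgen{f})$, or equivalently $\MF^{\mathrm{dg}}_{S_d}(f)$ by \cref{thm:HHPeriodicity}, is a dg enhancement. By the remark following that equivalence, $\D$ is $\Hom$-finite and Krull--Schmidt, and in particular idempotent complete. Finally, the definition of a simple singularity says that $\MCM(S_d/\exgen{f})$ has finitely many indecomposables. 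Passing to the stable category only removes the indecomposable projective $S_d/\exgen{f}$, so $\D$ has finitely many indecomposables as well. Muro's theorem then yields the first assertion: $\D$ has a unique dg enhancement.

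\emph{Step 2: Hochschild cohomology.} Let $F\colon \DD_{sg}(S_d/\exgen{f}) \to \DD_{sg}(S_d/\exgen{g})$ be a triangulated equivalence. Choose the dg enhancement $\A \coloneqq \DD_{sg}^{dg}(S_d/\exgen{g})$ of the target. Composing the identification $H^0(\A) \simeq \DD_{sg}(S_d/\exgen{g})$ with a quasi-inverse of $F$ exhibits $\A$ as a dg enhancement of $\DD_{sg}(S_d/\exgen{f})$. By Step 1 this enhancement is quasi-equivalent to $\DD_{sg}^{dg}(S_d/\exgen{f})$. Hochschild cohomology of dg categories is invariant under quasi-equivalence, and in fact under Morita equivalence. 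It is computed as the endomorphism algebra of the diagonal bimodule in the derived category of bimodules, and a quasi-equivalence induces an equivalence of bimodule derived categories that carries the diagonal bimodule to the diagonal bimodule. This isomorphism respects the cup product, so it is an isomorphism of graded $k$-algebras, which gives the second assertion.

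\emph{Main obstacle.} The delicate point is matching the precise form of Muro's theorem with this setting. In particular one must check that ``unique enhancement'' there means uniqueness up to quasi-equivalence of dg categories, and not merely up to an equivalence of homotopy categories that need not be compatible with the given identification. Only the former is needed here, since we never require the quasi-equivalence to lift $F$ itself. If Muro's result is phrased for $A_\infty$- or model-categorical enhancements, I would first translate it to dg categories via the standard equivalence between these frameworks over a field. I would also double-check that $\D$ is idempotent complete, which follows from completeness of $S_d$ via the Krull--Schmidt property, in case this is among Muro's standing assumptions.
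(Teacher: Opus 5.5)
Your proposal is correct and follows the same route as the paper, which proves nothing here beyond citing Muro's uniqueness theorem \cite{MR4404125} for finite algebraic triangulated categories over a perfect field. Your check of the hypotheses ($k$ perfect; $\sMCM$ algebraic; $\Hom$-finite and Krull--Schmidt; finitely many indecomposables), your transport of the enhancement along $F^{-1}$, and your appeal to quasi-equivalence (Morita) invariance of Hochschild cohomology as a graded algebra simply make explicit what the citation leaves implicit. The only point worth adding is that $F$ must be $k$-linear for $\A$ to be a $k$-linear enhancement of $\DD_{sg}(S_d/\exgen{f})$, and this is guaranteed by the paper's standing convention on functors.
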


%% file: tables/Eq1.tex
\begin{longtable}{llll}
    \caption{Simple singularities of dimension $1$.}
    \label{tab:Eq1}
    \endfirsthead
    \multicolumn{4}{l}{\underline{Characteristic $0$ or not less than $7$}} \\ 
    & Type & Defining polynomial & Tyurina number \\
    & $A_n^0$ \textup{(}$n \ge 1$\textup{)} & $x^2+y^{n+1}$ & $n + \delta_{p, \gcd(p, n + 1)}$ \\
    & $D_n^0$ \textup{(}$n \ge 4$\textup{)} & $x^2y+y^{n-1}$ & $n$ \\
    & $E_6^0$ & $x^3+y^4$ & $6$ \\
    & $E_7^0$ & $x^3+xy^3$ & $7$ \\
    & $E_8^0$ & $x^3+y^5$ & $8$ \\
    & & & \\
    \multicolumn{4}{l}{\underline{Characteristic $5$}} \\ 
    & $A_n^0$ \textup{(}$n \ge 1$\textup{)} & $x^2+y^{n+1}$ & $n + \delta_{p, \gcd(p, n + 1)}$ \\
    & $D_n^0$ \textup{(}$n \ge 4$\textup{)} & $x^2y+y^{n-1}$ & $n$ \\
    & $E_6^0$ & $x^3+y^4$ & $6$ \\
    & $E_7^0$ & $x^3+xy^3$ & $7$ \\
    & $E_8^0$ & $x^3+y^5$ & $10$ \\
    & $E_8^1$ & $x^3+y^5+xy^4$ & $8$ \\
    & & & \\
    \multicolumn{4}{l}{\underline{Characteristic $3$}} \\ 
    & $A_n^0$ \textup{(}$n \ge 1$\textup{)} & $x^2+y^{n+1}$ & $n + \delta_{p, \gcd(p, n + 1)}$ \\
    & $D_n^0$ \textup{(}$n \ge 4$\textup{)} & $x^2y+y^{n-1}$ & $n$ \\
    & $E_6^0$ & $x^3+y^4$ & $9$ \\
    & $E_6^1$ & $x^3+y^4+x^2y^2$ & $7$ \\
    & $E_7^0$ & $x^3+xy^3$ & $9$ \\
    & $E_7^1$ & $x^3+xy^3+x^2y^2$ & $7$ \\
    & $E_8^0$ & $x^3+y^5$ & $12$ \\
    & $E_8^1$ & $x^3+y^5+x^2y^3$ & $10$ \\
    & $E_8^2$ & $x^3+y^5+x^2y^2$ & $8$ \\
    & & & \\
    \multicolumn{4}{l}{\underline{Characteristic $2$}} \\ 
    & $A_{2n-1}^0$ \textup{(}$n \ge 1$\textup{)} & $x^2+xy^n$ & $2n-1+\delta_{2, \gcd(2,n)}$ \\
    & $A_{2n}^r$ \textup{(}$n \ge 1$ and $0 \le r < n$\textup{)} & $x^2+y^{2n+1}+(1-\delta_{0, r})xy^{2n-r}$ & $4n-2r-1+\delta_{2, \gcd(2,r)}$ \\
    & $D_{2n}^0$ \textup{(}$n \ge 2$\textup{)} & $x^2y+xy^n$ & $2n$\\
    & $D_{2n+1}^r$ \textup{(}$n \ge 2$ and $0 \le r < n$\textup{)} \quad & $x^2y+y^{2n}+(1-\delta_{0, r})xy^{2n-r} \quad$ & $4n-2r$ \\
    & $E^0_6$ & $x^3+y^4$ & $8$ \\
    & $E^1_6$ & $x^3+y^4+xy^3$ & $6$ \\
    & $E_7^0$ & $x^3+xy^3$ & $7$ \\
    & $E_8^0$ & $x^3+y^5$ & $8$
\end{longtable}

%% file: tables/Eq2.tex
\begin{longtable}{llll}
    \caption{Simple singularities of dimension $2$.}
    \label{tab:Eq2}
    \endfirsthead
    \multicolumn{4}{l}{\underline{Characteristic $0$ or not less than $7$}} \\ 
    & Type & Defining polynomial & Tyurina number \\
    & $A_n^0$ \textup{(}$n \ge 1$\textup{)} & $x^{n+1}+yz$ & $n + \delta_{p, \gcd(p, n + 1)}$ \\
    & $D_{2n}^0$ \textup{(}$n \ge 2$\textup{)} & $z^2+x^2y+xy^n$ & $2n$ \\
    & $D_{2n+1}^0$ \textup{(}$n \ge 2$\textup{)} & $z^2+x^2y+y^nz$ & $2n+1$ \\
    & $E_6^0$ & $z^2+x^3+y^2z$ & $6$ \\
    & $E_7^0$ & $z^2+x^3+xy^3$ & $7$ \\
    & $E_8^0$ & $z^2+x^3+y^5$ & $8$ \\
    & & & \\
    \multicolumn{4}{l}{\underline{Characteristic $5$}} \\ 
    & $A_n^0$ \textup{(}$n \ge 1$\textup{)} & $x^{n+1}+yz$ & $n + \delta_{5, \gcd(5, n + 1)}$ \\
    & $D_{2n}^0$ \textup{(}$n \ge 2$\textup{)} & $z^2+x^2y+xy^n$ & $2n$ \\
    & $D_{2n+1}^0$ \textup{(}$n \ge 2$\textup{)} & $z^2+x^2y+y^nz$ & $2n+1$ \\
    & $E_6^0$ & $z^2+x^3+y^2z$ & $6$ \\
    & $E_7^0$ & $z^2+x^3+xy^3$ & $7$ \\
    & $E^0_8$ & $z^2+x^3+y^5$ & $10$ \\
    & $E^1_8$ & $z^2+x^3+y^5+xy^4$ & $8$ \\
    & & & \\
    \multicolumn{4}{l}{\underline{Characteristic $3$}} \\ 
    & $A_n^0$ \textup{(}$n \ge 1$\textup{)} & $x^{n+1}+yz$ & $n + \delta_{3, \gcd(3, n + 1)}$ \\
    & $D_{2n}^0$ \textup{(}$n \ge 2$\textup{)} & $z^2+x^2y+xy^n$ & $2n$ \\
    & $D_{2n+1}^0$ \textup{(}$n \ge 2$\textup{)} & $z^2+x^2y+y^nz$ & $2n+1$ \\
    & $E^0_6$ & $z^2+x^3+y^2z$ & $9$ \\
    & $E^1_6$ & $z^2+x^3+y^2z+xyz$ & $7$ \\
    & $E^0_7$ & $z^2+x^3+xy^3$ & $9$ \\
    & $E^1_7$ & $z^2+x^3+xy^3+x^2y^2$ & $7$ \\
    & $E^0_8$ & $z^2+x^3+y^5$ & $12$ \\
    & $E^1_8$ & $z^2+x^3+y^5+x^2y^3$ & $10$ \\
    & $E^2_8$ & $z^2+x^3+y^5+x^2y^2$ & $8$ \\
    & & & \\
    \multicolumn{4}{l}{\underline{Characteristic $2$}} \\ 
    & $A_n^0$ \textup{(}$n \ge 1$\textup{)} & $x^{n+1}+yz$ & $n + \delta_{2, \gcd(2, n + 1)}$ \\
    & $D_{2n}^r$ \textup{(}$n \ge 2$ and $0 \le r < n$\textup{)} & $z^2+x^2y+xy^n+(1-\delta_{0, r})xy^{n-r}z$ & $4n-r$\\
    & $D_{2n+1}^r$ \textup{(}$n \ge 2$ and $0 \le r < n$\textup{)} \quad & $z^2+x^2y+y^nz+(1-\delta_{0, r})xy^{n-r}z \quad$ & $4n-r$ \\
    & $E^0_6$ & $z^2+x^3+y^2z$ & $8$ \\
    & $E^1_6$ & $z^2+x^3+y^2z+xyz$ & $6$ \\
    & $E^0_7$ & $z^2+x^3+xy^3$ & $14$ \\
    & $E^1_7$ & $z^2+x^3+xy^3+x^2yz$ & $12$ \\
    & $E^2_7$ & $z^2+x^3+xy^3+y^3z$ & $10$ \\
    & $E^3_7$ & $z^2+x^3+xy^3+xyz$ & $8$ \\
    & $E^0_8$ & $z^2+x^3+y^5$ & $16$ \\
    & $E^1_8$ & $z^2+x^3+y^5+xy^3z$ & $14$ \\
    & $E^2_8$ & $z^2+x^3+y^5+xy^2z$ & $12$ \\
    & $E^3_8$ & $z^2+x^3+y^5+y^3z$ & $10$ \\
    & $E^4_8$ & $z^2+x^3+y^5+xyz$ & $8$
\end{longtable}

%% file: tables/AR1.tex
\begin{longtable}{lcl}
    \caption{Auslander--Reiten quivers of $1$-dimensional simple singularities.}
    \label{tab:AR1}
    \endfirsthead
    $A_{2n-1}^0$ & $\input{diagrams/1AOdd}$ & \textup{(}$n+1$ vertices\textup{)} \\
    $A_{2n}^r$ & $\input{diagrams/1AEven}$ & \textup{(}$n$ vertices\textup{)} \\
    $D_{2n}^0$ & $\input{diagrams/1DEven}$ & \textup{(}$4n$ vertices\textup{)} \\
    $D_{2n+1}^r$ & $\input{diagrams/1DOdd}$ & \textup{(}$4n-1$ vertices\textup{)} \\
    $E_6^r$ & $\input{diagrams/1E6}$ &  \\
    $E_7^r$ & $\input{diagrams/1E7}$ &  \\
    $E_8^r$ & $\input{diagrams/1E8}$ & 
\end{longtable}

%% file: diagrams/1AOdd.tex
\begin{tikzcd}[ampersand replacement=\&, row sep=tiny]
    \&
        \&
            \&
                \&
                    \bullet \ar[shift left=0.4ex]{dl} \\
    \bullet \ar[shift left=0.4ex]{r} \ar[out=305,in=235,loop, dashed] \&
        \bullet \ar[shift left=0.4ex]{r} \ar[shift left=0.4ex]{l} \ar[out=305,in=235,loop, dashed] \&
            \cdots \ar[shift left=0.4ex]{r} \ar[shift left=0.4ex]{l} \&
                \bullet \ar[shift left=0.4ex]{l} \ar[shift left=0.4ex]{ur} \ar[shift left=0.4ex]{dr} \ar[out=305,in=235,loop, dashed] \&
                    \\
    \&
        \&
            \&
                \&
                    \bullet \ar[shift left=0.4ex]{ul} \ar[dashed, <->]{uu}
\end{tikzcd}

%% file: diagrams/1AEven.tex
\begin{tikzcd}[ampersand replacement=\&]
    \bullet \ar[shift left=0.4ex]{r} \ar[out=305,in=235,loop, dashed] \&
        \bullet \ar[shift left=0.4ex]{r} \ar[shift left=0.4ex]{l} \ar[out=305,in=235,loop, dashed] \&
            \cdots \ar[shift left=0.4ex]{r} \ar[shift left=0.4ex]{l} \&
                \bullet \ar[shift left=0.4ex]{l} \ar[out=305,in=235,loop, dashed] \ar[out=35,in=325,loop]
\end{tikzcd}

%% file: diagrams/1DEven.tex
\begin{tikzcd}[ampersand replacement=\&, row sep=tiny]
    \&
        \&
            \&
                \&
                    \&
                        \bullet \ar{dl}  \ar[dashed, <->]{dd} \\
    \bullet \ar[dashed, <->]{ddd} \ar{dddr} \&
        \bullet \ar{l} \ar{r} \ar[dashed, <->]{ddd} \&
            \bullet \ar{r} \ar{dddl} \ar[dashed, <->]{ddd} \&
                \cdots \ar{r} \ar{dddl} \&
                    \bullet \ar{dr} \ar{dddl} \ar[dashed, <->]{ddd} \ar{ddddr} \&
                        \\
    \&
        \&
            \&
                \&
                    \&
                        \bullet \ar{ddl} \\
    \&
        \&
            \&
                \&
                    \&
                        \bullet \ar[dashed, <->]{dd} \ar{uul} \\
    \bullet \ar{uuur} \&
        \bullet \ar{r} \ar{l} \&
            \bullet \ar{r} \ar{uuul} \&
                \cdots \ar{r} \ar{uuul} \&
                    \bullet \ar{uuul} \ar{ur} \ar{uuuur} \&
                        \\
    \&
        \&
            \&
                \&
                    \&
                        \bullet \ar{ul} \\
\end{tikzcd}

%% file: diagrams/1DOdd.tex
\begin{tikzcd}[ampersand replacement=\&, row sep=tiny]
    \bullet \ar[dashed, <->]{dd} \ar{ddr} \&
        \bullet \ar{r} \ar[dashed, <->]{dd} \ar{l} \&
            \bullet \ar{r} \ar{ddl} \ar[dashed, <->]{dd} \&
                \cdots \ar{r} \ar{ddl} \&
                    \bullet \ar[shift left=0.4ex]{dr} \ar{ddl} \ar[dashed, <->]{dd} \&
                        \\
    \&
        \&
            \&
                \&
                    \&
                        \bullet \ar[shift left=0.4ex]{ul} \ar[shift left=0.4ex]{dl} \ar[out=305,in=235,loop, dashed] \\
    \bullet \ar{uur} \&
        \bullet \ar{r} \ar{l} \&
            \bullet \ar{r} \ar{uul} \&
                \cdots \ar{r} \ar{uul} \&
                    \bullet \ar[shift left=0.4ex]{ur} \ar{uul} \&
                    
\end{tikzcd}

%% file: diagrams/1E6.tex
\begin{tikzcd}[ampersand replacement=\&, row sep=tiny]
    \&
        \&
            \bullet \ar{r} \ar[dashed, <->]{dd} \ar[shift left=0.4ex]{dl}\&
                \bullet \ar{ddl} \ar[dashed, <->]{dd} \\
    \bullet \ar[shift left=0.4ex]{r} \ar[out=305,in=235,loop, dashed] \&
        \bullet \ar[shift left=0.4ex]{l} \ar[shift left=0.4ex]{ur} \ar[shift left=0.4ex]{dr} \ar[out=305,in=235,loop, dashed] \&
            \&
                \\
    \&
        \&
            \bullet \ar{r} \ar[shift left=0.4ex]{ul}\&
                \bullet \ar{uul}
\end{tikzcd}

%% file: diagrams/1E7.tex
\begin{tikzcd}[ampersand replacement=\&, row sep=tiny]
    \bullet \ar{r} \ar[dashed, <->]{dd}  \&
        \bullet \ar{r} \ar{ddl} \ar[dashed, <->]{dd} \&
            \bullet \ar{r} \ar{ddl} \ar[dashed, <->]{dd} \&
                \bullet \ar{r} \ar{ddl} \ar[dashed, <->]{dd} \ar{dddl} \&
                    \bullet \ar{r} \ar{ddl} \ar[dashed, <->]{dd} \&
                        \bullet \ar{ddl} \ar[dashed, <->]{dd} \\
    \phantom{\bullet} \&
        \&
            \&
                \&
                    \&
                        \\
    \bullet \ar{r} \&
        \bullet \ar{r} \ar{uul} \&
            \bullet \ar{r} \ar{uul} \&
                \bullet \ar{r} \ar{uul} \ar{dr} \&
                    \bullet \ar{r} \ar{uul} \&
                        \bullet \ar{uul}\\
    \&
        \&
            \bullet \ar[dashed, <->]{rr} \ar{ur} \&
                \&
                    \bullet \ar{uuul} \&
                        
\end{tikzcd}

%% file: diagrams/1E8.tex
\begin{tikzcd}[ampersand replacement=\&, row sep=tiny]
    \bullet \ar{r} \ar[dashed, <->]{dd} \&
        \bullet \ar{r} \ar{ddl} \ar[dashed, <->]{dd} \&
            \bullet \ar{r} \ar{ddl} \ar[dashed, <->]{dd} \ar{dddl} \&
                \bullet \ar{r} \ar{ddl} \ar[dashed, <->]{dd} \&
                    \bullet \ar{r} \ar{ddl} \ar[dashed, <->]{dd} \&
                        \bullet \ar{r} \ar{ddl} \ar[dashed, <->]{dd} \&
                            \bullet \ar{ddl} \ar[dashed, <->]{dd} \\
    \phantom{\bullet} \&
        \&
            \&
                \&
                    \&
                        \&
                            \\
    \bullet \ar{r} \&
        \bullet \ar{r} \ar{uul} \&
            \bullet \ar{r} \ar{uul} \ar{dr} \&
                \bullet \ar{r} \ar{uul} \&
                    \bullet \ar{r} \ar{uul} \&
                        \bullet \ar{r} \ar{uul} \&
                            \bullet \ar{uul}\\
    \&
        \bullet \ar[dashed, <->]{rr} \ar{ur} \&
            \&
                \bullet \ar{uuul} \&
                    \&
                        \&
                        
\end{tikzcd}

%% file: tables/AR2.tex
\begin{longtable}{lcl}
    \caption{Auslander--Reiten quivers of $2$-dimensional simple singularities.}
    \label{tab:AR2}
    \endfirsthead
    $A_n^0$ & $\input{diagrams/2A}$ & \textup{(}$n$ vertices\textup{)} \\
    $D_n^r$ & $\input{diagrams/2D}$ & \textup{(}$n$ vertices\textup{)} \\
    $E_n^r$ & $\input{diagrams/2E}$ & \textup{(}$n$ vertices\textup{)}
\end{longtable}

%% file: diagrams/2A.tex
\begin{tikzcd}[ampersand replacement=\&]
    \bullet \ar[shift left=0.4ex]{r} \ar[out=305,in=235,loop, dashed] \&
        \bullet \ar[shift left=0.4ex]{r} \ar[shift left=0.4ex]{l} \ar[out=305,in=235,loop, dashed] \&
            \cdots \ar[shift left=0.4ex]{r} \ar[shift left=0.4ex]{l} \&
                \bullet \ar[shift left=0.4ex]{l} \ar[out=305,in=235,loop, dashed]
\end{tikzcd}

%% file: diagrams/2D.tex
\begin{tikzcd}[ampersand replacement=\&, row sep=tiny]
    \&
        \&
            \&
                \&
                    \bullet \ar[shift left=0.4ex]{dl} \ar[out=305,in=235,loop, dashed] \\
    \bullet \ar[shift left=0.4ex]{r} \ar[out=305,in=235,loop, dashed] \&
        \bullet \ar[shift left=0.4ex]{r} \ar[shift left=0.4ex]{l} \ar[out=305,in=235,loop, dashed] \&
            \cdots \ar[shift left=0.4ex]{r} \ar[shift left=0.4ex]{l} \&
                \bullet \ar[shift left=0.4ex]{l} \ar[shift left=0.4ex]{ur} \ar[shift left=0.4ex]{dr} \ar[out=305,in=235,loop, dashed] \&
                    \\
    \&
        \&
            \&
                \&
                    \bullet \ar[shift left=0.4ex]{ul} \ar[out=305,in=235,loop, dashed]
\end{tikzcd}

%% file: diagrams/2E.tex
\begin{tikzcd}[ampersand replacement=\&]
    \&
        \&
            \bullet \ar[shift left=0.4ex]{d} \ar[out=35,in=325,loop, dashed] \&
                \&
                    \&
                        \\
    \bullet \ar[shift left=0.4ex]{r} \ar[out=305,in=235,loop, dashed] \&
        \bullet \ar[shift left=0.4ex]{r} \ar[shift left=0.4ex]{l} \ar[out=305,in=235,loop, dashed] \&
            \bullet \ar[shift left=0.4ex]{r} \ar[shift left=0.4ex]{l} \ar[shift left=0.4ex]{u} \ar[out=305,in=235,loop, dashed] \&
                \bullet \ar[shift left=0.4ex]{r} \ar[shift left=0.4ex]{l} \ar[out=305,in=235,loop, dashed] \&
                    \cdots \ar[shift left=0.4ex]{r} \ar[shift left=0.4ex]{l} \&
                        \bullet \ar[shift left=0.4ex]{l} \ar[out=305,in=235,loop, dashed]
\end{tikzcd}

%% file: materials/MainResult.tex
\section{Singularity Categories of Simple Singularities}
Let $S \coloneqq k[x_0, x_1, \ldots, x_d]$ be a polynomial ring with $d \geq 0$,
$m_S \coloneqq \exgen{x_0, x_1, \ldots, x_d}_S$ a maximal ideal of $S$ and $f \in S$ a polynomial
such that $\Spec (S / \exgen{f})$ has only one singular point at the origin.
By convention, the completions of these rings are always taken with respect to the origin.
In this section, we show that the $0$th Hochschild cohomology $\HH^0(\DD_{sg}^{dg}(\widehat{S} / \exgen{f}))$
is isomorphic to the Tyurina algebra $T_f$ of the polynomial $f$ as $\widehat{S} / \exgen{f}$-algebras in arbitrary characteristic.
This allows us to distinguish the singularity categories of $d$-dimensional simple singularities
which are not analytically isomorphic.

\begin{lem}
    \begin{items}
        \label{lem:Koszul}
        \item \label{lem:Koszul:1} As $S / \exgen{f}$-modules,
        \begin{align*}
            &H_n\expar{K_{\bullet}\expar{S / \exgen{f} ; \frac{\partial f}{\partial x_0},
            \frac{\partial f}{\partial x_1}, \ldots, \frac{\partial f}{\partial x_d}}} \cong
            H_n\expar{K_{\bullet}\expar{S ;f, \frac{\partial f}{\partial x_0},
            \frac{\partial f}{\partial x_1}, \ldots, \frac{\partial f}{\partial x_d}}}
                & &\text{for $n \ge 0$.}
        \end{align*}
        \item \label{lem:Koszul:2} \begin{align*}
            &H_n\expar{K_{\bullet}\expar{S ;f, \frac{\partial f}{\partial x_0},
            \frac{\partial f}{\partial x_1}, \ldots, \frac{\partial f}{\partial x_d}}} = 0
                & &\text{for $n \ge 2$.}
        \end{align*}
    \end{items}
\end{lem}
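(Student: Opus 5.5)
For \cref{lem:Koszul:1}, the plan is the standard comparison of Koszul complexes when the first element is a nonzerodivisor. Write $f_i \coloneqq \partial f / \partial x_i$. There is an isomorphism of complexes
\[
    K_{\bullet}(S; f, f_0, \ldots, f_d) \cong K_{\bullet}(S; f) \otimes_S K_{\bullet}(S; f_0, \ldots, f_d).
\]
Here $K_{\bullet}(S;f)$ is $0 \to S \xrightarrow{f} S \to 0$. We may assume $f \neq 0$ (otherwise the singular locus is not a point), so $f$ is a nonzerodivisor in the domain $S$. Hence $K_{\bullet}(S;f)$ is a free resolution of $S/\exgen{f}$.

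Since $K_{\bullet}(S; f_0, \ldots, f_d)$ is a bounded complex of free $S$-modules, tensoring with it preserves quasi-isomorphisms. Therefore
\[
    K_{\bullet}(S; f, f_0, \ldots, f_d) \simeq S/\exgen{f} \otimes_S K_{\bullet}(S; f_0, \ldots, f_d) = K_{\bullet}(S/\exgen{f}; f_0, \ldots, f_d),
\]
which gives the isomorphism on homology. It is $S/\exgen{f}$-linear, because both homologies are killed by $f$: for the left side since $f$ appears in the sequence. Equivalently, one can take the long exact sequence of the mapping cone for multiplication by $f$ on $K_{\bullet}(S; f_0,\ldots,f_d)$ and use that $f$ is regular.

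For \cref{lem:Koszul:2}, use depth sensitivity of the Koszul complex over the Noetherian ring $S$. Let $I \coloneqq \exgen{f, f_0, \ldots, f_d}$, which has $d+2$ generators. The homology $H_n$ vanishes for $n > d+2 - \operatorname{grade}(I)$, so it suffices to show $\operatorname{grade}(I,S) \ge d+1$. Since $S$ is Cohen--Macaulay, grade equals height, so we need $\operatorname{ht} I \ge d+1$, i.e.\ $V(I)$ is a finite set of closed points.

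A closed point of $V(I)$ is a point of $\Spec(S/\exgen{f})$ at which all partials of $f$ vanish. By the Jacobian criterion over the algebraically closed (hence perfect) field $k$, such points are exactly the singular points of the hypersurface. By hypothesis the only one is the origin. So $V(I)$ consists of the origin alone, or is empty if $f$ is smooth there, and in that case $I = S$ and everything vanishes. Hence $\operatorname{ht} I = d+1$ and $H_n = 0$ for $n \ge 2$.

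The main point requiring care is this last step. The Jacobian criterion must be applied with the full ideal $\exgen{f, f_0, \ldots, f_d}$ rather than the Jacobian ideal alone: in positive characteristic $f$ need not lie in the Jacobian ideal, and this is exactly why $f$ is included. One must also make sure that the hypothesis "only one singular point" is read on the affine scheme $\Spec(S/\exgen{f})$, so that $V(I)$ is zero-dimensional globally, not only near the origin.
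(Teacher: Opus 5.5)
Your proposal is correct and follows the paper's proof. For (1), the paper simply cites the regular-element reduction of Bruns--Herzog (Proposition~1.6.13(b)), which you reprove directly from $K_\bullet(S;f,f_0,\ldots,f_d)\cong K_\bullet(S;f)\otimes_S K_\bullet(S;f_0,\ldots,f_d)$ and the fact that $K_\bullet(S;f)$ resolves $S/(f)$. For (2), the paper uses exactly your chain: depth sensitivity, grade $=$ height in the Cohen--Macaulay ring $S$, and $V(I)=\{m_S\}$ from the isolated-singularity hypothesis, giving vanishing above $d+2-(d+1)=1$.

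One quibble with your closing remark: $f$ can fail to lie in the Jacobian ideal in characteristic $0$ as well, for non-quasi-homogeneous isolated singularities, so that is not what is special about positive characteristic. The reason $f$ is included is that $V(I)$ is then exactly the singular locus of the hypersurface. The genuinely characteristic-$p$ issue, as the paper's remark notes, is that the partials alone need not form a regular sequence, and this does not affect your argument.
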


\begin{proof}
    See \cite[Proposition~1.6.13~(b)]{MR1251956} for \cref*{lem:Koszul:1}.
    We only consider \cref*{lem:Koszul:2}. Set
    \begin{align*}
        &\mu \coloneqq \max \inset{i \ge 0}{H_i\expar{K_{\bullet}\expar{S ;f, \frac{\partial f}{\partial x_0}, \frac{\partial f}{\partial x_1}, \ldots, \frac{\partial f}{\partial x_d}}} \neq 0}, \\
        &I \coloneqq \exgen{f, \frac{\partial f}{\partial x_0}, \frac{\partial f}{\partial x_1}, \ldots, \frac{\partial f}{\partial x_d}}_{S}.
    \end{align*}
    Then
    \begin{align*}
        \mu &= d + 2 - \depth_I (S)
            & &\text{by \cite[Proposition~1.6.17 (b)]{MR1251956}} \\
        &= d + 2 - \height_S (I)
            & &\text{by \cite[Corollary~2.1.4]{MR1251956}.}
    \end{align*}
    Since $\Spec (S / \exgen{f})$ has only one singular point at the origin, it follows that
    $\VV[S](I) = \exset{m_S}$. This implies $\sqrt{I} = m_S$, and hence
    \[ \height_S (I) = \height_S (m_S) = d + 1. \]
    Combining these equalities, we get $\mu = 1$.
\end{proof}

\begin{dfn}
    Let $A$ be a finitely generated commutative $k$-algebra and $n$ an integer.
    \begin{items}
        \item The $n$-th \emph{Hochschild cohomology} of $A$ is defined by
        \[ \HH^n (A) \coloneqq \Hom_{\DD^{b}(A \tens[k] A)}(A, A[n]). \]
        \item The $n$-th \emph{singular Hochschild cohomology} of $A$ is defined by
        \[ \HH[sg]^n (A) \coloneqq \Hom_{\DD_{sg}(A \tens[k] A)}(A, A[n]). \]
    \end{items}
\end{dfn}

\begin{prop}
    \label{prop:HHToTyu}
    As $\widehat{S} / \exgen{f}$-algebras,
    \[
        \HH^0(\DD_{sg}^{dg}(\widehat{S} / \exgen{f})) \cong T_f.
    \]
\end{prop}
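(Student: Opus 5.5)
We plan to reduce $\HH^0(\DD_{sg}^{dg}(\widehat{S}/\exgen{f}))$ to the Koszul homology computed in \cref{lem:Koszul}. The route goes through singular Hochschild cohomology, and we work with $A \coloneqq S/\exgen{f}$ before completing.

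\textbf{Step 1: identify the Hochschild cohomology with singular Hochschild cohomology.} By Keller's theorem, the Hochschild cohomology of the dg singularity category of $A$ agrees with the singular Hochschild cohomology $\HH[sg]^{\bullet}(A)$. This is valid in arbitrary characteristic, since it only uses that $A$ is a finitely generated $k$-algebra with $A\tens[k]A$ Noetherian. Since the singular locus is the origin, localizing or completing at the origin does not change the singularity category up to idempotent completion. We will therefore compute $\HH[sg]^{0}(A)$ and then pass to $\widehat{S}/\exgen{f}$.

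\textbf{Step 2: use periodicity to reach large degrees.} By \cref{thm:HHPeriodicity}, $\HH^0 \cong \HH^{2m}$ for every $m$. It is known (Buchweitz, Keller) that $\HH[sg]^n(A)\cong \HH^n(A)$ for $n$ large; in fact the natural map $\HH^n(A)\to\HH[sg]^n(A)$ is an isomorphism for $n > \dim$ of a suitable bound. So it suffices to compute $\HH^{2m}(A)$ for large $m$.

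\textbf{Step 3: compute Hochschild cohomology of the hypersurface.} We use the standard resolution of $A$ over $A\tens[k]A$ for a hypersurface: a Koszul complex on $x_i\otimes 1 - 1\otimes x_i$ twisted by a divided-power variable of degree $2$. Applying $\Hom(-,A)$ gives a complex quasi-isomorphic to the total complex of
\[
K_{\bullet}\expar{A;\tfrac{\partial f}{\partial x_0},\ldots,\tfrac{\partial f}{\partial x_d}}
\]
tensored with $A[t]$, $\deg t = 2$, with differential given by $df$ pairing. This is the BACH/Wolffe-type computation, and it is characteristic free because only derivatives of $f$ appear, never division. In even degree $2m \gg 0$ one obtains
\[
\HH^{2m}(A)\cong H_0 \oplus H_2\oplus\cdots
\]
modulo the correct spectral sequence, where $H_i$ is the Koszul homology above. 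By \cref{lem:Koszul}, $H_i = 0$ for $i\ge 2$, and $H_0 = S/\exgen{f,\partial f} = T_f$ (which is already complete, being finite length at the origin). The spectral sequence therefore degenerates, and $\HH^{2m}(A)\cong T_f$.

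\textbf{Step 4: check the algebra structure.} The identification is $A$-linear, and $T_f$ is cyclic, generated by the identity. This means the isomorphism respects multiplication: the unit maps to $1$ and the $A$-action determines the product.

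\textbf{Main obstacle.} We expect the difficulty to lie in Steps 1–2: making sure Keller's comparison and the stabilization $\HH^n \cong \HH[sg]^n$ for large $n$ hold without characteristic assumptions, and compatibly with completion. If the direct comparison is problematic, we would instead compute Hochschild cohomology of $\MF^{dg}_{\widehat S}(f)$ directly, via the curved-algebra (Dyckerhoff/Polishchuk–Positselski) description. This gives the $2$-periodized Koszul complex on $\partial f$ over $\widehat S$ with differential involving $f$, and its even cohomology is $H_0$ plus the vanishing higher $H_i$ by \cref{lem:Koszul}.
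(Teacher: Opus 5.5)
Your main argument is essentially the paper's proof: Keller's comparison with singular Hochschild cohomology, the stabilization $\HH[sg]^n\cong\HH^n$ for large $n$, $2$-periodicity, the BACH computation combined with \cref{lem:Koszul}, passage to $\widehat{S}/\exgen{f}$ through Orlov's equivalence up to idempotent completion and Morita invariance, and a cyclicity argument for the algebra structure; the only variation is that you work with the finitely generated algebra $S/\exgen{f}$ throughout (using that $H_0$ is already supported at the origin) rather than with $S_{m_S}/\exgen{f}$ and \cite[Lemma~5.6]{MR4817470}. Three minor repairs are needed: the even-degree formula holds because the BACH cochain complex splits as a direct sum of truncated Koszul complexes (vanishing of $H_{\ge 2}$ alone would not rule out a differential from $H_1$ into $H_0$); an $A$-linear isomorphism need not send $1$ to $1$, so argue instead that cyclicity of $T_f$ and Nakayama force the structure map $\widehat{S}/\exgen{f}\to\HH^0$ to be surjective with kernel the annihilator of $T_f$; and your curved-algebra fallback is not a valid substitute, since the $\ZZ/2$-graded computation yields the Milnor algebra $\widehat{S}/\exgen{\partial f/\partial x_0,\ldots,\partial f/\partial x_d}$ rather than $T_f$ and needs the partial derivatives to form a regular sequence, which is exactly what fails here (\cref{rem:HK}).
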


\begin{proof}
    We adopt an approach similar to that of \cite[Theorem~5.9]{MR4817470} (see \cref{rem:HK}).
    By \cite[Corollary~5.4]{MR4817470}, there exists an integer $n \ge 1$ such that as $S_{m_S} / \exgen{f}$-modules
    \begin{align*}
        &\HH[sg]^i (S_{m_S} / \exgen{f}) \cong \HH^i (S_{m_S} / \exgen{f})
            & &\text{for any $i \ge n$.}
    \end{align*}
    Also by \cite[Theorem~3.2.7 (2)]{MR1176152} and \cref{lem:Koszul},
    we get the following $S / \exgen{f}$-isomorphisms for any integer $i$ with $2i \ge d + 1$:
    \begin{align*}
        \HH^{2i} (S / \exgen{f})
        &\cong \bigoplus_{i \leq j \leq (2i + d + 1) / 2} H_{2j - 2i} \expar{K_{\bullet}
        \expar{S / \exgen{f}; \frac{\partial f}{\partial x_0}, \frac{\partial f}{\partial x_1}, \ldots, \frac{\partial f}{\partial x_d}}} \\
        &\cong H_0 \expar{K_{\bullet}\expar{S; f, \frac{\partial f}{\partial x_0}, \frac{\partial f}{\partial x_1}, \ldots, \frac{\partial f}{\partial x_d}}} \\
        &\cong S \bigg/ \exgen{f, \frac{\partial f}{\partial x_0}, \frac{\partial f}{\partial x_1}, \ldots, \frac{\partial f}{\partial x_d}}.
    \end{align*}
    Take an integer $i$ satisfying $2i \ge \max \exset{n, d + 1}$.
    Note that since the canonical functor
    $\overline{\DD_{sg}(S_{m_S} / \exgen{f})} \to \DD_{sg}(\widehat{S} / \exgen{f})$
    is a triangulated equivalence
    (see \cite[Proposition~2.7]{MR2735755}, \cite[Proposition~A.1]{MR2776613} and \cite[Lemma~5.6]{MR2824483}),
    it follows that the dg singularity categories $\DD_{sg}^{dg}(S_{m_S} / \exgen{f})$ and
    $\DD_{sg}^{dg}(\widehat{S} / \exgen{f})$ are Morita equivalent (e.g., \cite[Proposition~4.4.6]{MR2762557}).
    Then as $S_{m_S} / \exgen{f}$-modules,
    \begin{align*}
        &\HH^0 (\DD_{sg}^{dg}(\widehat{S} / \exgen{f})) \\
        &\cong \HH^{2i} (\DD_{sg}^{dg}(\widehat{S} / \exgen{f}))
            & &\text{by \cref{thm:HHPeriodicity}} \\
        &\cong \HH^{2i} (\DD_{sg}^{dg}(S_{m_S} / \exgen{f})) \\
        &\cong \HH[sg]^{2i} (S_{m_S} / \exgen{f})
            & &\text{by \cite[Theorem 1]{MR3907577}} \\
        &\cong \HH^{2i} (S_{m_S} / \exgen{f})
            & &\text{by $2i \ge n$} \\
        &\cong \HH^{2i} (S / \exgen{f})_{m_S}
            & &\text{by \cite[Lemma 5.6]{MR4817470}} \\
        &\cong S_{m_S} \bigg/ \exgen{f, \frac{\partial f}{\partial x_0}, \frac{\partial f}{\partial x_1}, \ldots, \frac{\partial f}{\partial x_d}}
            & &\text{by $2i \ge d + 1$} \\
        &\cong T_f.
    \end{align*}
    As a consequence, the induced $S_{m_S} / \exgen{f}$-linear isomorphism $\HH^0 (\DD_{sg}^{dg}(\widehat{S} / \exgen{f})) \cong T_f$
    is also an isomorphism as $\widehat{S} / \exgen{f}$-algebras.
\end{proof}

\begin{rem}
    \label{rem:HK}
    In characteristic $0$, \cref{prop:HHToTyu} is shown in \cite[Theorem~5.9]{MR4817470}.
    Although the characteristic $0$ assumption is not explicitly imposed,
    their proof may fail in positive characteristic:
    they use \cite[Theorem~5.5]{MR4817470} to prove \cite[Theorem~5.9]{MR4817470} and
    it depends on the property that
    the partial derivatives $\partial f /\partial x_0, \partial f /\partial x_1, \ldots, \partial f /\partial x_d$
    constitute a regular sequence in $S$.
    This is not true in positive characteristic in general.
\end{rem}

\begin{thm}
    \label{thm:LongWork}
    For simple singularities $\widehat{S} / \exgen{f}$ and $\widehat{S} / \exgen{g}$ of dimension $d$, they are isomorphic
    if and only if their singularity categories $\DD_{sg}(\widehat{S} / \exgen{f})$ and $\DD_{sg}(\widehat{S} / \exgen{g})$
    are triangulated equivalent.
\end{thm}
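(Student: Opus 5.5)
The \emph{only if} direction is immediate, so the work is in the converse. Suppose $\DD_{sg}(\widehat{S} / \exgen{f}) \simeq \DD_{sg}(\widehat{S} / \exgen{g})$ as $k$-linear triangulated categories. I will extract two invariants from this equivalence.
\begin{itemize}
    \item \emph{The Auslander--Reiten quiver.} A triangulated equivalence between $\Hom$-finite Krull--Schmidt categories preserves Auslander--Reiten triangles, so the two AR quivers are isomorphic as translation quivers.
    \item \emph{The Tyurina algebra.} By \cref{thm:TriToHH}, the enhancement is unique, so $\HH^{0}(\DD_{sg}^{dg}(\widehat{S} / \exgen{f})) \cong \HH^{0}(\DD_{sg}^{dg}(\widehat{S} / \exgen{g}))$ as $k$-algebras. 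By \cref{prop:HHToTyu}, this gives $T_f \cong T_g$ as $k$-algebras. In particular $\tau(f) = \tau(g)$, and the finer algebra structure of $T_f$ is also available.
\end{itemize}
The goal is to show that these two invariants together separate all isomorphism classes in \cref{tab:Eq1,tab:Eq2}.

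First I reduce the dimension. For $d \ge 3$, \cref{thm:Classification} \cref*{thm:Classification:2} writes every simple singularity as $S_{d-2}/\exgen{h} \mapsto S_d/\exgen{h + x_{d-1}x_d}$. \cref{thm:Periodicity} then identifies the singularity categories with those in dimension $d-2$, and the bijection lets me descend isomorphism classes. So it suffices to treat $d \in \exset{0,1,2}$.

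For $d=0$, the category $\DD_{sg}(k\exdbra{x_0}/\exgen{x_0^{n+1}})$ has exactly $n$ indecomposables by \cref{eg:ZeroSimple} (the projective module drops out), so $n$ is recovered from the category.

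For $d = 1,2$, I use \cref{thm:AR}. In dimension $2$, the AR quiver is the double of the Dynkin diagram $T_n$, so it determines $T$ and $n$. In dimension $1$, the vertex counts and shapes in \cref{tab:AR1} determine $T$ and $n$ up to the listed coincidences. (The quiver of $A_{2n}^r$ has a loop, which distinguishes it from the others; I will compare the remaining cases carefully.) After this step the only remaining ambiguity is the superscript $r$ within a fixed type $T_n$, in a fixed characteristic.

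I then check the Tyurina numbers in \cref{tab:Eq1,tab:Eq2}. Within each fixed type and characteristic, they are strictly decreasing in $r$:
\begin{itemize}
    \item $4n - r$ for $D^r$ in characteristic $2$ (dimension 2);
    \item $4n - 2r - 1 + \delta$ and $4n - 2r$ for $A_{2n}^r$, $D_{2n+1}^r$ in characteristic $2$ (dimension 1);
    \item the sequences $14,12,10,8$ and $16,14,12,10,8$ for the $E_7^r$, $E_8^r$ series in characteristic $2$, and similarly for the $E$ series in characteristics $3$ and $5$.
\end{itemize}
Hence $\tau(f) = \tau(g)$ forces equal $r$, and therefore $f$ and $g$ are of the same type in the classification, so $\widehat{S}/\exgen{f} \cong \widehat{S}/\exgen{g}$.

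The main obstacle is the dimension-$1$, characteristic-$2$ case $A_{2n}^r$. Its Tyurina number $4n - 2r - 1 + \delta_{2,\gcd(2,r)}$ is not obviously injective in $r$, since $r$ and $r+1$ could in principle collide. I first check this directly: for even $r$ the value is $4n - 2r$, and for odd $r$ it is $4n - 2r - 1$, so the values are all distinct. If a genuine collision turns up elsewhere, I will fall back on comparing the full algebras $T_f$, for instance via their Hilbert–Samuel functions, $\dim_k T_f/\mathfrak m^j T_f$, or the socle.

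I must also double-check the dimension-1 AR-quiver collisions noted in the introduction, namely that $A$ and $D$ families with different $n$ share quivers, such as $A_{2n-1}$ and $D_{2n+1}$. There, Tyurina numbers or other algebra invariants of $T_f$, such as embedding dimension or the number of minimal generators of the maximal ideal, should separate the cases. I expect this case-by-case table verification to be the bulk of the work.
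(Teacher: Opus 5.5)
Your proposal is correct and is essentially the paper's proof. You reduce to $d\in\{1,2\}$ via \cref{thm:Classification} and \cref{thm:Periodicity}, with $d=0$ settled by counting indecomposables, use the AR quiver to fix the type $T_n$, and then read off $r$ from $\tau(f)=\tau(g)$ (obtained via \cref{thm:TriToHH} and \cref{prop:HHToTyu}) in \cref{tab:Eq1,tab:Eq2}. One correction: \cref{tab:AR1} has no cross-type coincidences such as $A_{2n-1}$ versus $D_{2m+1}$, since $A_{2n-1}^0$ has $n-1$ $\tau$-fixed vertices while $D_{2m+1}^r$ has exactly one, and $A_3^0$ has $3$ vertices while $D_{2m+1}^r$ has $4m-1\ge 7$. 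The non-injectivity mentioned in the introduction is exactly the $r$-ambiguity within a fixed $T_n$ (e.g.\ $E_8^0$ versus $E_8^1$ in characteristic $5$), so your fallback invariants of $T_f$ are not needed.
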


\begin{proof}
    Since the \emph{only if} implication is obvious, we only prove the converse.
    It suffices to consider the case where $d = 1$ or $2$ (see \cref{eg:ZeroSimple}, \cref{thm:Classification} \cref*{thm:Classification:2}
    and \cref{thm:Periodicity}), and the Auslander--Reiten quivers of the singularity categories
    $\DD_{sg}(\widehat{S} / \exgen{f})$ and $\DD_{sg}(\widehat{S} / \exgen{g})$ are the same (see \cref{thm:AR}).
    Assume that $\DD_{sg}(\widehat{S} / \exgen{f})$ and $\DD_{sg}(\widehat{S} / \exgen{g})$ are triangulated equivalent.
    Then \cref{thm:TriToHH} and \cref{prop:HHToTyu} yield
    \[ 
        \tau(f) = \rk_k \HH^0 (\DD_{sg}^{dg}(\widehat{S} / \exgen{f}))
                = \rk_k \HH^0 (\DD_{sg}^{dg}(\widehat{S} / \exgen{g}))
                = \tau(g).
    \]
    By \cref{tab:Eq1,tab:Eq2}, we obtain
    \[ \widehat{S} / \exgen{f} \cong \widehat{S} / \exgen{g}. \qedhere \]
\end{proof}

%% file: materials/App.tex
\section{(Non-)Standardness of Singularity Categories of Rational Double Points}
As an application of \cref{thm:LongWork}, we will determine when the singularity category
of a rational double point (i.e., a simple singularity of dimension $2$) is standard.
Let $S$ denote the polynomial ring $k[x, y, z]$,
$R$ the quotient ring $S / \exgen{f}$ with $f$ one of the polynomials listed in \cref{tab:Eq2},
$T_n^r$ ($T \in \exset{A, D, E}$, $n \ge 1$ and $r \ge 0$) the type of the rational double point $S / \exgen{f}$,
and $Q$ a quiver whose underlying graph is the Dynkin graph of type $T$.
By convention, the completions of the rings $S$ and $R$ are always taken with respect to the origin.
Note that the polynomial $f$ is weighted homogeneous if and only if $r = 0$.
In this case, $\MCM^{\ZZ} (R)$ (resp. $\sMCM^{\ZZ} (R)$) stands for the (resp. stable) category of
graded maximal Cohen--Macaulay $R$-modules, and the $\Hom$-set from one object $M$ to another $N$
in $\MCM^{\ZZ} (R)$ (resp. $\sMCM^{\ZZ} (R)$) is denoted by $\Hom_R^{\ZZ} (M, N)$ (resp. $\sHom_R^{\ZZ} (M, N)$).

\begin{dfn}
    Let $\A$ be a Krull--Schmidt category.
    The full subcategory of $\A$ consisting of a chosen representative
    from each isomorphism class of indecomposable objects is denoted by $\ind (\A)$.
    In the case where $\# \ind (\A) < \infty$, the \emph{Auslander algebra} $\Aus(\A)$ of $\A$ is defined to be
    the endomorphism ring $\End_\A (\bigoplus_{M \in \ind(\A)} M)$.
\end{dfn}

\begin{dfn}
    \label{dfn:Std}
    A $\Hom$-finite $k$-linear Krull--Schmidt category $\A$
    is \emph{standard} if
    \[ \ind(\A) \simeq k(\varGamma, \tau), \]
    where $(\varGamma, \tau)$ is the Auslander--Reien quiver of $\A$.
\end{dfn}

\begin{rem}
    \label{rem:Std}
    In the case where $\# \ind (\A) < \infty$,
    the $\Hom$-finite $k$-linear Krull--Schmidt category $\A$ is standard
    if and only if $\Aus(\A) \cong k[\varGamma, \tau]$ (e.g., \cite[Proposition~2.3.28]{MR4486377}).
\end{rem}

\begin{prop}[{\cite[Theorem~3.3]{MR4172701}, cf. \cite[Corollary~2]{MR3864195}}]
    \label{prop:Recover}
    Set
    \begin{align*}
        &\mathsf{Cat} \coloneqq \inset{\A}{
            \begin{array}{ll}
                \textup{$\A$ is a standard $\Hom$-finite $k$-linear Krull--Schmidt}\\
                \textup{algebraic triangulated category satisfying $\# \ind (\A) < \infty$}
            \end{array}
        }, \\
        &\mathsf{Alg} \coloneqq \inset{k[\varGamma, \tau]}{
            \textup{$(\varGamma, \tau)$ is a finite translation quiver
            satisfying $\rk_k k[\varGamma, \tau] < \infty$}
        }.
    \end{align*}
    Then the mapping
    \[
        \begin{array}{ccc}
            \mathsf{Cat} / \textup{(triangulated equivalence)} & \rightleftarrows & \mathsf{Alg} / \mathord{\cong} \\
            \A & \mapsto & \Aus(\A) \\
            \proj k[\varGamma, \tau] & \mapsfrom & k[\varGamma, \tau]\\
        \end{array}
    \]
    is a well-defined bijection.
\end{prop}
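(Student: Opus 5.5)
The statement is the bijection between standard algebraic triangulated categories with finitely many indecomposables (up to triangulated equivalence) and finite-dimensional mesh algebras (up to isomorphism). I will check well-definedness of both maps and then that they are mutually inverse.

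\emph{Map $\A \mapsto \Aus(\A)$.} Let $\A$ be in $\mathsf{Cat}$ with Auslander--Reiten quiver $(\varGamma, \tau)$. Since $\A$ is standard, \cref{rem:Std} gives $\Aus(\A) \cong k[\varGamma, \tau]$. The quiver $\varGamma$ is finite because $\# \ind(\A) < \infty$, and $\rk_k k[\varGamma,\tau] = \rk_k \Aus(\A) < \infty$ by $\Hom$-finiteness. If $F\colon \A \to \A'$ is a triangulated equivalence, then $F$ sends $\ind(\A)$ bijectively onto $\ind(\A')$ up to isomorphism. Hence it induces $\Aus(\A) \cong \Aus(\A')$, so the map descends to equivalence classes.

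\emph{Map $k[\varGamma,\tau] \mapsto \proj k[\varGamma,\tau]$.} This is the harder direction. I need three facts about $\Lambda \coloneqq k[\varGamma,\tau]$:
\begin{itemize}
\item $\proj \Lambda$ carries a triangulated structure;
\item that structure is algebraic;
\item $\proj\Lambda$ is standard, with Auslander--Reiten quiver $(\varGamma,\tau)$.
\end{itemize}

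For the first, the plan is to show that a finite-dimensional mesh algebra is self-injective, using the Nakayama permutation induced by $\tau$. One then invokes Amiot's construction: $\proj\Lambda$ is triangulated, with suspension given by a twist by a Nakayama-type automorphism, which is determined up to an inner automorphism. Algebraicity should follow by realizing this structure as a stable or orbit category of a Frobenius or dg model, for instance the stable category of a Frobenius category built from $\Lambda$. Standardness is then immediate: $\ind(\proj\Lambda)$ is the path category modulo mesh relations, $k(\varGamma,\tau)$, and its Auslander--Reiten quiver is recovered as $(\varGamma,\tau)$.

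\emph{Bijectivity.} The composite $\mathsf{Alg}\to\mathsf{Cat}\to\mathsf{Alg}$ sends $\Lambda$ to $\End(\bigoplus P_i) \cong \Lambda$, so it is the identity. For $\mathsf{Cat}\to\mathsf{Alg}\to\mathsf{Cat}$, the Yoneda functor $\A \to \proj \Aus(\A)$, $M\mapsto \Hom_\A(\bigoplus_{N \in \ind(\A)} N, M)$, is an additive equivalence because $\A$ is Krull--Schmidt with finitely many indecomposables.

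The main obstacle is upgrading this additive equivalence to a \emph{triangulated} equivalence, that is, showing the triangulated structure on $\proj\Lambda$ is unique. I would follow Amiot's classification: a standard algebraic triangulated category with finitely many indecomposables is triangle equivalent to an orbit category of $\DD^b(\mod kQ)$ for a Dynkin quiver $Q$. Such a category is determined by its Auslander--Reiten quiver, hence by the mesh algebra. Concretely, I would lift the suspension to an automorphism of $(\varGamma,\tau)$ and compare with the orbit category $\DD^b(kQ)/\Phi$. Uniqueness of the enhancement in the Dynkin case, as in \cref{thm:TriToHH}, then gives triangle equivalence. This is essentially the content of the cited \cite[Theorem~3.3]{MR4172701}, and I would invoke it for this step.
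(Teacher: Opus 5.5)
The paper gives no argument for this proposition. It simply imports it from \cite[Theorem~3.3]{MR4172701} (cf.\ \cite[Corollary~2]{MR3864195}). Your plan likewise hands the decisive step, upgrading the additive equivalence $\A \simeq \proj \Aus(\A)$ to a triangle equivalence, to that theorem, and your treatment of $\A \mapsto \Aus(\A)$ is fine. However, the scaffolding you build for the reverse map contains steps that fail.

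First, a finite-dimensional mesh algebra need not be self-injective. Read $\mathsf{Alg}$ literally and let $\varGamma$ be the quiver $1 \to 2$ with $\tau$ defined nowhere. Then $k[\varGamma,\tau]$ is the path algebra of $A_2$. The nonzero map between its two indecomposable projectives is a monomorphism in $\proj k[\varGamma,\tau]$ that does not split, whereas monomorphisms in a triangulated category always split, so $\proj k[\varGamma,\tau]$ is not triangulated. The reverse map therefore only makes sense for \emph{stable} translation quivers. This restriction is harmless, since Auslander--Reiten quivers of triangulated categories are stable.

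Second, self-injectivity is not the input Amiot's construction needs. It requires a bimodule isomorphism $\Omega^3_{\Lambda^e}\Lambda \cong {}_1\Lambda_\sigma$, and the suspension is then twisting by $\sigma$. For instance, $\Lambda = k[x]/(x^3)$ is self-injective. But for a triangle $\Lambda \xrightarrow{x} \Lambda \to C \to \Sigma\Lambda \cong \Lambda$, the long exact $\Hom(\Lambda,-)$-sequence gives $\dim_k \Hom(\Lambda, C) = 2$, while $C \cong \Lambda^m$ forces $3 \mid \dim_k \Hom(\Lambda, C)$. For finite-dimensional mesh algebras of stable Dynkin-type translation quivers, this twisted $3$-periodicity is a genuine theorem that must be cited. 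Likewise, the algebraicity of the resulting structure is not immediate. Nor is the fact that its Auslander--Reiten translation $\nu\Sigma^{-1}$ ($\nu$ the Nakayama functor) equals the given $\tau$. Both are part of what the cited result supplies.

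Your injectivity step, ``determined by its Auslander--Reiten quiver, hence by the mesh algebra'', is also a real gap. The algebra $\Lambda = k[\varGamma,\tau]$ recovers $\varGamma$ as its Gabriel quiver. It also recovers $\tau x$ for every vertex $x$ with an incoming arrow, as the source of the unique minimal relation ending at $x$. But it does not see $\tau$ on isolated vertices.

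Let $\mathcal{U}_n$ be the perfect derived category of the graded field $k[u^{\pm 1}]$, with $\deg u = n$ and zero differential. Then $\mathcal{U}_1 \times \mathcal{U}_1$ and $\mathcal{U}_2$ are both standard and algebraic. Each has two indecomposables with no nonzero maps between them, and both have Auslander algebra $k \times k$. Yet $\Sigma$ fixes both indecomposables in the first and swaps them in the second, so they are not triangle equivalent. So, as stated here, $\A \mapsto \Aus(\A)$ is injective only under an extra hypothesis. For example, one could require every vertex of $\varGamma$ to have an incoming arrow, or record $\tau$ together with the algebra. This is harmless for the use in \cref{thm:RDPStd}, where $\varGamma$ is a connected Dynkin-type quiver from \cref{tab:AR2} or a single vertex.

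Finally, uniqueness of the dg enhancement of each category separately does not compare two triangulated structures on the same additive category. What is really needed is the Amiot--Keller theorem: such a category is triangle equivalent to an orbit category $\DD^{b}(\mathrm{mod}\, kQ)/\Phi$, with $Q$ and $\Phi$ determined by the Auslander--Reiten quiver.
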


\begin{dfn}
    Let $\A$ be a $k$-linear category and $\tau \colon \A \to \A$ an equivalence.
    The \emph{orbit category} $\A / \tau$ of $\A$ under $\tau$ is defined as follows:
    \begin{itemize}
        \item the objects in $\A / \tau$ consist of those in $\A$;
        \item $\Hom_{\A / \tau} (M, N) \coloneqq \bigoplus_{n \in \ZZ} \Hom_{\A} (M, \tau^n(N))$
        for objects $M$ and $N$ in $\A / \tau$.
    \end{itemize}
    For a morphism $a \colon M \to N$ in $\A / \tau$, its homogeneous component of degree $n \in \ZZ$ is denoted by $a_n$:
    \begin{align*}
        &a = \sum_{n \in \ZZ} a_n
            & &\text{where $a_n \in \Hom_{\A} (M, \tau^n(N))$.}
    \end{align*}
\end{dfn}

\begin{rem}
    \label{rem:OrbitUniv}
    The canonical projection $\pi \colon \A \to \A / \tau$ is endowed with a natural isomorphism
    $\pi \circ \tau \cong \pi$ and is $2$-universal among such functors:
    for any functor $\phi \colon \A \to \B$ equipped with a natural isomorphism $\phi \circ \tau \cong \phi$,
    there exists a functor $\overline{\phi} \colon \A / \tau \to \B$,
    unique up to natural isomorphism, such that $\overline{\phi} \circ \pi \cong \phi$.
\end{rem}

\begin{lem}
    \label{lem:OrbitBasic}
    Let $\A$ be a $\Hom$-finite $k$-linear Krull--Schmidt category and $\tau \colon \A \to \A$ an equivalence
    such that the orbit category $\A / \tau$ is a $\Hom$-finite $k$-linear category.
    \begin{items}
        \item \label{lem:OrbitBasic:1} Let $M$ be an object of $\A$.
        Then $M$ is indecomposable in $\A / \tau$ if and only if so is in $\A$.
        \item \label{lem:OrbitBasic:2} Let $M$ and $N$ be indecomposable objects in $\A$.
        Then $M \cong N$ in $\A / \tau$ if and only if $M \cong \tau^n(N)$ for some $n \in \ZZ$.
    \end{items}
\end{lem}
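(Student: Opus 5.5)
The plan is to exploit the grading of $\Hom$-spaces in the orbit category together with the Krull--Schmidt property of $\A$. Throughout, write $\pi \colon \A \to \A / \tau$ for the canonical projection. For an object $M$ of $\A$, the endomorphism algebra $E \coloneqq \End_{\A/\tau}(M) = \bigoplus_{n \in \ZZ} \Hom_\A(M, \tau^n(M))$ is a $\ZZ$-graded finite-dimensional $k$-algebra, since $\A/\tau$ is assumed $\Hom$-finite. Its degree-$0$ part is $E_0 = \End_\A(M)$.

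For \cref*{lem:OrbitBasic:1}, one direction is easy. If $M \cong M_1 \oplus M_2$ nontrivially in $\A$, then $\pi$ is additive, so the image of the corresponding idempotent is a nontrivial idempotent of $E$. Nontriviality holds because the image of $1_{M_i}$ is the identity of $M_i$ in $\A / \tau$, which is nonzero as $M_i \ne 0$. For the converse, assume $M$ is indecomposable in $\A$, so $E_0$ is local. I will show $E$ is local by showing that every homogeneous element of nonzero degree is nilpotent, and more precisely that the ideal $J$ generated by $\mathrm{rad}(E_0)$ and $\bigoplus_{n \ne 0} E_n$ is nilpotent, whence $E/J \cong E_0/\mathrm{rad}(E_0)$ is a division algebra.

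The key step is this. Since $E$ is finite-dimensional, $E_n = 0$ for $|n| \gg 0$. For $a \in E_n$ with $n \neq 0$, a power $a^m$ lies in $E_{mn}$, which vanishes for large $m$. For products mixing degrees, I would argue with the Jacobson radical. It suffices to show $E_n \subseteq \mathrm{rad}(E)$ for $n \ne 0$; equivalently, $1 - ba$ is invertible for $a \in E_n$ and any $b$. Decomposing $ba$ into homogeneous pieces, the degree-$0$ part is $b_{-n} a$, a composite $M \to \tau^n M \to M$ in $\A$, twisted by the functor isomorphisms. I must show this is in $\mathrm{rad}(E_0)$. If it were invertible, $M$ would be a direct summand of $\tau^n(M)$, hence $M \cong \tau^n(M)$ by indecomposability. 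In that case one needs a separate argument, and this is the main obstacle. When $M \cong \tau^n M$ with $n \ne 0$, choosing an isomorphism $\phi \in E_n$ gives $\phi^m \in E_{mn}$ invertible for all $m$, contradicting finite-dimensionality of $E$. So $M \not\cong \tau^n(M)$ for every $n \neq 0$, and all composites $M \to \tau^n M \to M$ with $n \ne 0$ are radical. Then $E_0$-local plus this gives that the degree-$0$ part of any element of $J$ is in $\mathrm{rad}(E_0)$. A standard graded argument then shows elements with invertible degree-$0$ component are invertible in $E$, modulo nilpotent corrections, using the bounded grading. Hence $E$ is local and $M$ is indecomposable in $\A/\tau$.

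For \cref*{lem:OrbitBasic:2}, the "if" direction comes from the natural isomorphism $\pi \circ \tau \cong \pi$. For "only if", suppose $u \colon M \to N$ and $v \colon N \to M$ are mutually inverse in $\A/\tau$. The degree-$0$ part of $vu = 1_M$ is $\sum_n (\tau^{n} v_{-n}) \circ u_n$, again up to the twisting isomorphisms, and it equals $1_M$. Since $\End_\A(M)$ is local, some summand $(\tau^n v_{-n}) u_n$ is invertible. Then $M$ is a direct summand of $\tau^n(N)$, which is indecomposable because $\tau$ is an equivalence, so $M \cong \tau^n(N)$.
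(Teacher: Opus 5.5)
Your proof is correct. Part (2) and the easy half of part (1) match the paper's argument. For the hard half of part (1) you take a different route.

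\textbf{The paper's route.} The paper never treats $E=\End_{\A/\tau}(M)$ as a ring. It takes an arbitrary decomposition $M\cong N\oplus N'$ in $\A/\tau$ and applies locality of $\End_\A(M)$ to the degree-$0$ component of $\id_M=ba+b'a'$. This makes $M$ a summand of some $\tau^m(N)$ in $\A$, hence $N\cong M\oplus M'$ in $\A/\tau$. A dimension count on endomorphism spaces in $M\cong M\oplus M'\oplus N'$ then forces $N'=0$.

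\textbf{Your route.} You prove that $E$ is local. You apply the same locality trick to endomorphisms of $M$ itself, and your $\phi^m$ argument gives $M\not\cong\tau^n(M)$, hence $E_{-n}E_n\subseteq\mathrm{rad}(E_0)$ for $n\neq 0$.

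\textbf{Comparison.} The paper's route is shorter and purely elementary. Yours needs a graded ring-theoretic step but gives a formally stronger conclusion: a local endomorphism ring, not merely the absence of nontrivial direct-sum decompositions. This matters because idempotents in $\A/\tau$ need not split a priori. Combined with Krull--Schmidt for $\A$, it also shows that $\A/\tau$ is again Krull--Schmidt.

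\textbf{The step to write out.} You should spell out what you call a ``standard graded argument'', because bounded grading alone does not suffice. In $M_2(k)$ graded by $\deg e_{ij}=j-i$, the element $1-e_{12}-e_{21}$ has degree-$0$ part $1$ but is singular. What makes the step work is exactly $E_{-n}E_n\subseteq\mathrm{rad}(E_0)$. This makes $J=\mathrm{rad}(E_0)\oplus\bigoplus_{n\neq 0}E_n$ a two-sided ideal with $J\cap E_0=\mathrm{rad}(E_0)$. Suppose $E_n=0$ for $|n|>B$ and $\mathrm{rad}(E_0)^L=0$, and consider a product of $N\ge(2B+1)L$ homogeneous elements of $J$.
\begin{itemize}
\item If the product is nonzero, all its partial degree sums lie in $[-B,B]$, so some value occurs $L+1$ times.
\item This cuts out $L$ consecutive degree-$0$ subproducts, each lying in $J\cap E_0=\mathrm{rad}(E_0)$.
\item Their product is a subword of the whole and lies in $\mathrm{rad}(E_0)^L=0$, a contradiction.
\end{itemize}
Hence $J^N=0$, so $J=\mathrm{rad}(E)$ and $E/J\cong E_0/\mathrm{rad}(E_0)$ is a division ring. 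With this in place, the detour through invertibility of $1-ba$ is unnecessary.
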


\begin{proof}
    The \emph{only if} implication of \cref*{lem:OrbitBasic:1} and the \emph{if} implication of \cref*{lem:OrbitBasic:2} are obvious.

    \emph{Step 1.} We first consider the \emph{if} implication of \cref*{lem:OrbitBasic:1}.
    Assume that the object $M$ is indecomposable in $\A$, and let
    \begin{align*}
        &\begin{tikzcd}[ampersand replacement=\&, column sep = huge]
            M \ar[shift left=0.4ex]{r}{        
                \begin{pmatrix}
                    a \\
                    a'
                \end{pmatrix}
            } \&
                N \oplus N' \ar[shift left=0.4ex]{l}{
                    \begin{pmatrix}
                        b & b'
                    \end{pmatrix}
                }
        \end{tikzcd}
            & \text{in $\A / \tau$.}
    \end{align*}
    be mutually inverse isomorphisms. Then
    \begin{align*}
        &(b \circ a)_0 + (b' \circ a')_0 = \id_M
            & &\text{in $\A$.}
    \end{align*}
    Since $M$ is indecomposable in the Krull--Schmidt category $\A$,
    it follows that the endomorphism ring $\End_{\A} (M)$ is local,
    and hence either $(b \circ a)_0$ or $(b' \circ a')_0$ is an isomorphism.
    Without loss of generality, we can assume that $(b \circ a)_0 = \sum_{m \in \ZZ} \tau^m (b_{-m}) \circ a_m$
    is an isomorphism.
    This yields a splitting monomorphism $M \to \oplus_{m \in \ZZ} \tau^m (N)$ in $\A$.
    Then $M$ is a direct summand of $\tau^m(N)$ in $\A$ for some $m \in \ZZ$.
    This implies that there exists an object $M'$ such that $N \cong M \oplus M'$ in $\A / \tau$.
    As a consequence,
    \begin{align*}
        &M \cong M \oplus M' \oplus N'
            & &\text{in $\A / \tau$.}
    \end{align*}
    Considering that $\A / \tau$ is a $\Hom$-finite $k$-linear category,
    we obtain $ M' \oplus N' = 0$. In particular, $N' = 0$.

    \emph{Step 2.} Next, we consider the \emph{only if} implication of \cref*{lem:OrbitBasic:2}.
    Assume $M \cong N$ in $\A / \tau$, and let $a \colon M \to N$ and $b \colon N \to M$
    be mutually inverse isomorphisms in $\A / \tau$. Then
    \begin{align*}
        &\id_M = b \circ a = \sum_{m, n \in \ZZ} \tau^m (b_n) \circ a_m
            & &\text{in $\A / \tau$}.
    \end{align*}
    In particular,
    \begin{align*}
        &\id_M = \sum_{m \in \ZZ} \tau^m (b_{-m}) \circ a_m
            & &\text{in $\A$}.
    \end{align*}
    Since the indecomposability of $M$ and the Krull--Schmidt property of $\A$
    imply that the endomorphism ring $\End_{\A} (M)$ is local,
    it follows that $\tau^m (b_{-m}) \circ a_m \colon M \to M$ is an isomorphism for some $m \in \ZZ$.
    Therefore, $a_m \colon M \to \tau^m (N)$ is a splitting monomorphism.
    Considering that $\tau^m (N)$ is indecomposable, we obtain that $a_m \colon M \to \tau^m (N)$ is an isomorphism.
\end{proof}

\begin{prop}[{\cite{MR2890586}, cf. \cite{MR2313537}}]
    \label{prop:StartingPoint}
    Assume $r = 0$. Then as $k$-linear triangulated categories,
    \[ \sMCM^{\ZZ} (R) \simeq \DD^{b} (\mod k[Q]). \]
    Moreover, the Auslander--Reiten translation of $\sMCM^{\ZZ} (R)$ is given by the negative degree shift \linebreak
    $(-1) \colon \sMCM^{\ZZ} (R) \to \sMCM^{\ZZ} (R)$.
\end{prop}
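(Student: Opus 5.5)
The statement to prove is \cref{prop:StartingPoint}: when $r = 0$, $\sMCM^{\ZZ}(R) \simeq \DD^{b}(\mod k[Q])$, and the Auslander--Reiten translation is $(-1)$. Since $r = 0$, $f$ is weighted homogeneous. The plan is to build a tilting object in $\sMCM^{\ZZ}(R)$ whose endomorphism algebra is $k[Q]$, in the style of Kajiura--Saito--Takahashi and Iyama--Takahashi, using only graded arguments. These do not see the characteristic.

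\emph{Grading and invariants.} Grade $S = k[x, y, z]$ with positive weights so that $f$ is homogeneous of degree $h$. The weights for the polynomials in \cref{tab:Eq2} with $r = 0$ can be read off directly; for example $E_8^0$ has $\deg x = 10$, $\deg y = 6$, $\deg z = 15$ and $h = 30$. Then $R$ is a graded Gorenstein ring of dimension $2$ with $a$-invariant
\[
a = h - \deg x - \deg y - \deg z = -1
\]
in all cases. This is a combinatorial check on the table and is valid in every characteristic. Graded Auslander--Reiten theory for an isolated singularity gives the graded Auslander--Reiten translation on $\sMCM^{\ZZ}(R)$ as $\Omega^{2-d}(a) = (a) = (-1)$. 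Here $d = 2$, and the syzygy drops out because $\Omega^2 = \mathrm{id}$ up to a degree shift for hypersurfaces; one must track that shift, since $\Omega^2 M \cong M(-h)$. The formula $\tau = \Omega^{2-\dim R}\circ(a)$ only needs $R$ to be graded Gorenstein with isolated singularity. This settles the second assertion.

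\emph{Tilting object.} Because $a < 0$, Iyama--Takahashi's theorem applies: for a graded Gorenstein isolated singularity with $a < 0$, the object
\[
T = \bigoplus_{i=1}^{-a} (R/\mathfrak m)^{\mathrm{st}}(i),
\]
or more generally the stabilizations of $R/R_{\ge j}(i)$ in the appropriate range, is tilting in $\sMCM^{\ZZ}(R)$. With $a = -1$ it suffices to take a single suitable generator $U$ built from the truncations $R_{\ge j}$, $0 \le j < h$, and its stabilization. The proof uses Orlov's semiorthogonal decomposition relating $\sMCM^{\ZZ}(R)$ to $\DD^b(\operatorname{qgr} R)$. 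That decomposition is valid over any field, since it relies only on local cohomology and Gorenstein duality. Consequently $\sMCM^{\ZZ}(R) \simeq \DD^b(\mod \Lambda)$ with $\Lambda = \End(T)$, using that $\sMCM^{\ZZ}(R)$ is algebraic, so that tilting gives an equivalence (Keller).

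\emph{Identifying $\Lambda$.} This step is the main obstacle. The algebra $\Lambda$ is finite dimensional of finite global dimension, and $\DD^b(\mod\Lambda)$ has Serre functor $\tau[1] = (-1)[1]$. I would compute the graded stable Hom spaces between the summands directly, as degree-$0$ parts of $\underline{\Hom}$ in the ungraded category. Those spaces are at most one-dimensional, and their number is governed by the ungraded Auslander--Reiten quiver of \cref{tab:AR2}: $\ind \sMCM(R)$ has $n$ objects, and the graded lifts form $\ZZ Q$ components. The cleaner route avoids computing $\Lambda$ explicitly. The forgetful functor $\sMCM^{\ZZ}(R) \to \sMCM(R)$ has finitely many indecomposables up to the shift $(1)$, and every graded indecomposable is gradable with $\tau = (-1)$. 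It follows that $\DD^b(\mod\Lambda)$ is locally finite, so $\Lambda$ is derived equivalent to a hereditary algebra of Dynkin type. Its Auslander--Reiten quiver is $\ZZ\Delta$, where $\Delta$ is read off from the stable Auslander--Reiten quiver obtained by unrolling the loops $\tau$ of \cref{tab:AR2}, and this gives $\Delta$ of type $T$. Finally, by Happel the derived category of a representation-finite hereditary Dynkin type is unique up to equivalence and independent of orientation, so $\sMCM^{\ZZ}(R) \simeq \DD^b(\mod k[Q])$. The delicate points are gradability of all indecomposables, which holds because the ungraded indecomposables are finite in number and $\mathbb G_m$-stable, and checking that the covering $\ZZ\Delta \to$ (quiver of \cref{tab:AR2}) has the expected form in characteristics $2$, $3$ and $5$.
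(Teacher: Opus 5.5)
The paper gives no proof of this proposition. It imports it from \cite{MR2890586} (cf.\ \cite{MR2313537}). Your outline is therefore a genuinely different route, built from general theorems. Its first two steps are sound and visibly characteristic-free, which is exactly what the paper needs in characteristics $2$, $3$ and $5$. The graded Auslander--Reiten formula $\tau=\Omega^{2-d}\circ(a)$ gives $\tau=(a)$ when $d=2$. Iyama--Takahashi's tilting theorem for graded Gorenstein isolated singularities with $a<0$ then yields $\sMCM^{\ZZ}(R)\simeq\DD^{b}(\mod \Lambda)$ for a finite-dimensional $\Lambda$ of finite global dimension. Two small corrections apply here. For $A_n^0$ you must choose weights with $\deg x=1$; for $n$ even the symmetric weights $(2,n+1,n+1)$ give $a=-2$ and $\tau=(-2)$. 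And for $d=2$ no syzygy enters the formula at all, so the remark about $\Omega^2M\cong M(-h)$ is beside the point.

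\textbf{The gap is the identification of $\Lambda$.} The sentence ``$\DD^{b}(\mod \Lambda)$ is locally finite, so $\Lambda$ is derived equivalent to a hereditary algebra of Dynkin type'' is asserted rather than argued, and it hides a standardness statement. The Auslander--Reiten quiver alone does not determine such a category: by \cref{thm:LongWork}, rational double points with the same quiver in \cref{tab:AR2} have inequivalent singularity categories. The points you single out as delicate are not where the difficulty lies. What is actually needed is the following.
\begin{enumerate}
\item \emph{Gradability and lifting.} By \cite{MR1171231}, every indecomposable of $\sMCM(\widehat{R})$ (not ``every graded indecomposable'') is gradable, with graded lift unique up to $(1)$. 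Graded indecomposables remain indecomposable after completion, and graded Auslander--Reiten sequences complete to Auslander--Reiten sequences. Your torus-stability heuristic does not replace this.
\item \emph{The graded quiver.} With $\tau=(-1)$, the graded Auslander--Reiten quiver is a $\ZZ$-cover of the quiver in \cref{tab:AR2}. Lifted arrows $i\to j$ and $j\to i$ carry degree shifts satisfying $w_{ij}+w_{ji}=1$. Since $\Delta$ is a tree, you can renormalize the lifts to obtain $\ZZ Q$ for some orientation $Q$ of $\Delta$; no characteristic-dependent check is involved.
\item \emph{Nilpotent radical.} The radical of $\sMCM^{\ZZ}(R)$ is nilpotent. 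The faithful completion functor maps it into the radical of $\sMCM(\widehat{R})$, whose Auslander algebra is finite dimensional.
\item \emph{Standardness.} Since $\ZZ Q$ is simply connected, the covering-theoretic argument gives $\ind(\sMCM^{\ZZ}(R))\simeq k(\ZZ Q,\tau)$. This is the argument of Riedtmann and Bongartz--Gabriel \cite{MR0643558}, as used by Happel for $\DD^{b}(\mod k[Q])$ in \cite{MR0935124}.
\item \emph{A section is tilting.} Let $T$ be the sum of the objects on a section; it is tilting with hereditary endomorphism algebra of type $\Delta$. The shift is determined on objects as $\tau^{-1}$ composed with the Serre functor, so $\Hom(T,T[n])=0$ for $n\neq 0$ can be read off in $k(\ZZ Q,\tau)$. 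Every vertex is reached from the section by iterated Auslander--Reiten triangles, so $T$ generates. Keller's theorem and Happel's orientation-independence then finish the proof.
\end{enumerate}
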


\begin{prop}[{Cf. \cite[Proposition~1.5]{MR2776613}}]
    \label{prop:MCMOrbit}
    Assume $r = 0$. Then
    \[ \sMCM^{\ZZ} (R) / (-1) \simeq \sMCM(\widehat{R}). \]
\end{prop}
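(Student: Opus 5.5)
The plan is to compare the two sides directly: construct a functor $\sMCM^{\ZZ}(R) / (-1) \to \sMCM(\widehat{R})$ via the universal property of orbit categories, and check that it is fully faithful and essentially surjective. Let $\phi \colon \sMCM^{\ZZ}(R) \to \sMCM(\widehat{R})$ be the composite of the forgetful functor to $\sMCM(R)$ (forgetting the grading) with completion $\widehat{(-)} = - \tens[R] \widehat{R}$. Since forgetting the grading identifies $M(-1)$ with $M$, there is a natural isomorphism $\phi \circ (-1) \cong \phi$. By \cref{rem:OrbitUniv}, $\phi$ therefore factors as $\overline{\phi} \circ \pi$ for a functor $\overline{\phi} \colon \sMCM^{\ZZ}(R)/(-1) \to \sMCM(\widehat{R})$.

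\emph{Full faithfulness.} For graded MCM modules $M$ and $N$, the $R$-module $\Hom_R(M,N)$ is graded, with
\[ \Hom_R(M,N) = \bigoplus_{n \in \ZZ} \Hom_R^{\ZZ}(M, N(n)). \]
The same decomposition holds for the stable Hom: a morphism factors through a projective if and only if each of its homogeneous components does, because projective covers of graded modules can be chosen graded. Hence
\[ \Hom_{\sMCM^{\ZZ}(R)/(-1)}(M,N) \cong \sHom_R(M,N). \]
Next, $\sHom_R(M,N)$ is finite length and supported at the irrelevant ideal, since $R$ has an isolated singularity. Because $\widehat{R}$ is flat over $R$ and completion does not change finite-length modules supported at the maximal ideal,
\[ \sHom_R(M,N) \cong \sHom_R(M,N)\tens[R] \widehat{R} \cong \sHom_{\widehat{R}}(\widehat{M}, \widehat{N}). \]
One must check that these identifications are compatible with the map induced by $\overline{\phi}$. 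In particular, the projection formula $\sHom_R(M,N)\otimes\widehat R\cong\sHom_{\widehat R}(\widehat M,\widehat N)$ needs care, but it follows because $\Hom$ commutes with flat base change for finitely generated modules, and so does the subspace of maps factoring through projectives.

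\emph{Essential surjectivity.} This is the step I expect to be the main obstacle: every MCM $\widehat{R}$-module must be, up to isomorphism, the completion of a graded one. Here I would use finiteness. By \cref{prop:StartingPoint}, $\sMCM^{\ZZ}(R) \simeq \DD^{b}(\mod k[Q])$, and the degree shift $(-1)$ is the Auslander--Reiten translation there. The indecomposables of $\DD^b(\mod k[Q])$ up to the action of the AR translation number exactly the vertices of $Q$: they form $\ZZ Q$, and each $\tau$-orbit meets the projectives $P_i$ exactly once. By \cref{lem:OrbitBasic}, the orbit category then has exactly $|Q_0| = n$ indecomposables up to isomorphism, and $\overline{\phi}$ sends them to indecomposables, using full faithfulness and locality of endomorphism rings.

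Full faithfulness together with \cref{lem:OrbitBasic} \cref*{lem:OrbitBasic:2} shows that these images are pairwise non-isomorphic. On the other hand, the AR quiver of $\sMCM(\widehat{R})$ in \cref{tab:AR2} has exactly $n$ vertices. So the image contains every indecomposable, and since both categories are Krull--Schmidt, $\overline{\phi}$ is essentially surjective. For this counting step, the orbit category must be $\Hom$-finite. That follows from the full-faithfulness computation, because $\sHom_R(M,N)$ is finite-dimensional.
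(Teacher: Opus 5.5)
Your argument is correct, and its full-faithfulness half is essentially the paper's. The paper embeds $\bigoplus_{n}\sHom_R^{\ZZ}(M,N(-n))$ into $\prod_{n}\sHom_R^{\ZZ}(M,N(-n))=\sHom_R(M,N)^{\wedge}\cong\sHom_{\widehat R}(\widehat M,\widehat N)$ and uses finite-dimensionality to see that the inclusion is onto. That is the same as your observation that the finite-length module $\sHom_R(M,N)$ equals its completion.

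The genuine difference is essential surjectivity. The paper cites Auslander--Reiten \cite[Lemma~1.3~(d) and Theorem~3.2~(a)]{MR1171231}, i.e.\ that every MCM $\widehat R$-module is the completion of a graded one; this needs no knowledge of how many indecomposables there are. You instead count, using \cref{prop:StartingPoint} and \cref{lem:OrbitBasic} on one side and the $n$ vertices in \cref{tab:AR2} on the other. Your route stays inside the paper and yields, as a by-product, that completions of graded indecomposables in distinct $(-1)$-orbits are indecomposable and pairwise non-isomorphic. However, it rests on the classification-dependent data of \cref{thm:AR}.

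It also needs one step you only gesture at. \cref{lem:OrbitBasic} \cref*{lem:OrbitBasic:1} does not by itself make $\overline{\phi}(X)$ indecomposable, because the orbit category is not known to be idempotent complete. So you must show that $E=\bigoplus_{n\in\ZZ}\Hom(X,\tau^nX)$ is a local ring. This holds:
\begin{itemize}
\item Nonzero maps in $k(\ZZ Q,\tau)$ come from paths, and $\ZZ Q$ has no path from $X$ to $\tau^nX$ for $n>0$, since arrows never decrease the $\ZZ$-coordinate.
\item Hence only the components with $n\le 0$ survive.
\item The components with $n<0$ form a two-sided ideal, which is nilpotent because only finitely many components are nonzero.
\item The quotient by this ideal is the local ring $\End(X)$, so $E$ is local.
\end{itemize}
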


\begin{proof}
    Let
    \[
        \phi \colon
        \begin{array}{ccc}
            \sMCM^{\ZZ} (R)
                & \to
                    & \sMCM(\widehat{R}) \\
            M = \displaystyle \bigoplus_{n \in \ZZ} M_n
                & \mapsto
                    & \widehat{M} = \displaystyle \prod_{n \in \ZZ} M_n
        \end{array}
    \]
    be the functor induced from the completion with respect to the origin of $\Spec R$.
    Then by \cref{rem:OrbitUniv}, the equality $\phi \circ (-1) = \phi$ yields a unique functor
    $\overline{\phi} \colon \sMCM^{\ZZ} (R) / (-1) \to \sMCM(\widehat{R})$ such that
    \[ 
        \begin{tikzcd}[ampersand replacement=\&, column sep=tiny]
            \sMCM^{\ZZ} (R) \ar{rr}{\phi} \ar{dr}{} \&
                \ar[phantom]{d}{\circlearrowleft} \&
                    \sMCM(\widehat{R}) \\
            \&
                {\sMCM^{\ZZ} (R) / (-1)}\rlap{.} \ar{ur}[']{\overline{\phi}}
        \end{tikzcd}
    \]
    Since the functor $\phi \colon \sMCM^{\ZZ} (R) \to \sMCM(\widehat{R})$ is essentially surjective
    by \cite[Lemma~1.3~(d) and Theorem~3.2~(a)]{MR1171231},
    so is $\overline{\phi} \colon \sMCM^{\ZZ} (R) / (-1) \to \sMCM(\widehat{R})$.
    In what follows, we prove that the functor $\overline{\phi} \colon \sMCM^{\ZZ} (R) / (-1) \to \sMCM(\widehat{R})$
    is fully faithful.
    Take objects $M$ and $N$ in $\sMCM^{\ZZ} (R) / (-1)$. Then the diagram
    \[
        \begin{tikzcd}[ampersand replacement=\&]
            \displaystyle \bigoplus_{n \in \ZZ} \sHom_{R}^{\ZZ} (M, N(-n)) \ar{r}{\overline{\phi}} \ar[hookrightarrow]{d} \&
                \sHom_{\widehat{R}} (\widehat{M}, \widehat{N}) \\
            \displaystyle \prod_{n \in \ZZ} \sHom_R^{\ZZ} (M, N(-n)) \ar[equal]{r}{} \&
                \sHom_R (M, N)^{\wedge} \ar[u, "\sim"' sloped]
        \end{tikzcd}
    \]
    is commutative, and the canonical inclusion $\bigoplus_{n \in \ZZ} \sHom_{R}^{\ZZ} (M, N(-n)) \hookrightarrow
    \prod_{n \in \ZZ} \sHom_R^{\ZZ} (M, N(-n))$ is surjective by
    \[
        \rk_k \prod_{n \in \ZZ} \sHom_R^{\ZZ} (M, N(-n))
        = \rk_k \sHom_R (M, N)^{\wedge}
        = \rk_k \sHom_{\widehat{R}} (\widehat{M}, \widehat{N})
        < \infty. \qedhere
    \]
\end{proof}

Let the Auslander--Reiten translation of $\DD^{b} (\mod k[Q])$ be denoted by $\tau \colon \DD^{b} (\mod k[Q]) \to \DD^{b} (\mod k[Q])$,
and the Auslander--Reiten quiver of $\DD^{b} (\mod k[Q])$ by $(\ZZ Q, \tau)$ (see \cite[p.54]{MR0935124}).
Here,
\begin{itemize}
    \item the set $(\ZZ Q)_0$ of vertices in the quiver $\ZZ Q$ is $\ZZ \times Q$;
    \item any arrow $M \to N$ in $Q$ yields arrows in $\ZZ Q$
    \begin{align*}
        &\text{$(n, M) \to (n, N)$ and $(n-1, M) \to (n, N)$}
            & &\text{for $n \in \ZZ$,}
    \end{align*}
    and there are no arrows in $\ZZ Q$ otherwise;
    \item the Auslander--Reiten translation $\tau \colon \DD^{b} (\mod k[Q]) \to \DD^{b} (\mod k[Q])$ acts on $(\ZZ Q)_0$ by
    \begin{align*}
        &\tau(n, M) = (n-1, M)
            & &\text{for $(n, M) \in (\ZZ Q)_0$.}
    \end{align*}
\end{itemize}

\begin{thm}
    \label{thm:RDPStd}
    The singularity category $\DD_{sg} (\widehat{R})$ is standard if and only if $r = 0$.
\end{thm}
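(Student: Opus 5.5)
We prove the two implications separately. The ``if'' direction rests on the graded picture of \cref{prop:StartingPoint} and \cref{prop:MCMOrbit}. The ``only if'' direction uses \cref{prop:Recover} together with \cref{thm:LongWork}.

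\emph{The case $r = 0$.} By \cref{prop:StartingPoint}, $\sMCM^{\ZZ}(R) \simeq \DD^{b}(\mod k[Q])$, and the degree shift $(-1)$ corresponds to the Auslander--Reiten translation $\tau$. Since $Q$ is Dynkin, $\DD^{b}(\mod k[Q])$ is standard, that is, $\ind \DD^{b}(\mod k[Q]) \simeq k(\ZZ Q, \tau)$ (Happel). By \cref{prop:MCMOrbit} and \cref{thm:Periodicity}, $\DD_{sg}(\widehat{R}) \simeq \sMCM(\widehat{R}) \simeq \sMCM^{\ZZ}(R)/(-1)$.

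By \cref{lem:OrbitBasic}, the indecomposables of the orbit category are exactly the $\tau$-orbits of indecomposables of $\DD^{b}(\mod k[Q])$. Hence
\[ \ind(\DD_{sg}(\widehat{R})) \simeq k(\ZZ Q, \tau)/\tau \simeq k(\ZZ Q/\tau, \tau). \]
The second equivalence holds because the mesh relations are $\tau$-stable and the group $\langle\tau\rangle$ acts freely on $\ZZ Q$, so the orbit of the mesh category is the mesh category of the quotient translation quiver. Here one should check that morphism spaces in the orbit category, $\bigoplus_n \Hom(M,\tau^n N)$, are spanned by paths in $\ZZ Q/\tau$ modulo mesh relations; this is a covering argument in the style of Riedtmann.

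The quotient $\ZZ Q/\tau$ has vertex set $Q_0$. Each edge of $Q$ becomes a pair of opposite arrows, and $\tau$ acts as the identity. This is exactly the quiver of \cref{tab:AR2}, which is the Auslander--Reiten quiver of $\DD_{sg}(\widehat R)$ by \cref{thm:AR}. Therefore $\DD_{sg}(\widehat{R})$ is standard.

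\emph{The case $r > 0$.} Suppose $\DD_{sg}(\widehat{R})$ is standard. It is an algebraic, $\Hom$-finite, Krull--Schmidt triangulated category with finitely many indecomposables, so \cref{prop:Recover} applies. Let $R_0$ be the rational double point of type $T_n^0$ in the same characteristic. By \cref{thm:AR}, $\DD_{sg}(\widehat{R_0})$ has the same Auslander--Reiten quiver $(\varGamma,\tau)$ as $\DD_{sg}(\widehat R)$, and by the first part it is standard. Hence
\[ \Aus(\DD_{sg}(\widehat R)) \cong k[\varGamma,\tau] \cong \Aus(\DD_{sg}(\widehat{R_0})), \]
and \cref{prop:Recover} yields a triangulated equivalence $\DD_{sg}(\widehat R) \simeq \DD_{sg}(\widehat{R_0})$. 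By \cref{thm:LongWork}, $\widehat R \cong \widehat{R_0}$. But their Tyurina numbers differ by \cref{tab:Eq2}, a contradiction.

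The main obstacle is the identification $k(\ZZ Q,\tau)/\tau \simeq k(\ZZ Q/\tau,\tau)$ in the case $r=0$. The functor is clear, but fullness and faithfulness need care. I would construct the comparison functor from the universal property in \cref{rem:OrbitUniv}. Then I would compare dimensions of $\Hom$-spaces on both sides, for instance via additive functions (hammocks), using that the group action on $\ZZ Q$ is free. One also has to check that the mesh category of $\ZZ Q/\tau$ is finite-dimensional, so that \cref{prop:Recover} applies.
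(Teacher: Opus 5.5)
Your proposal is correct and follows the paper's proof. For $r=0$ you use $\sMCM^{\ZZ}(R)\simeq\DD^{b}(\mod k[Q])$, \cref{prop:MCMOrbit}, \cref{lem:OrbitBasic} and the covering identification $k(\ZZ Q,\tau)/\tau\simeq k(\ZZ Q/\tau,\id)$, which the paper settles by citing Gabriel's covering theory, so no separate hammock or dimension count is needed; for $r>0$ you combine \cref{prop:Recover} with \cref{thm:LongWork} exactly as the paper does. One small citation slip: the equivalence $\DD_{sg}(\widehat R)\simeq\sMCM(\widehat R)$ is Buchweitz's theorem from Section~2, not \cref{thm:Periodicity} (Kn\"{o}rrer periodicity).
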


\begin{proof}
    \emph{Step 1.} We first show the \emph{if} implication.
    Assume $r = 0$. Since the Auslander--Reiten translation of $\sMCM^{\ZZ} (R)$ (resp. $\DD^{b} (\mod k[Q])$)
    is given by the composition of the Serre functor and the negative shift $[-1]$,
    and the Serre functor is uniquely determined,
    it follows that the Auslander--Reiten translations of $\sMCM^{\ZZ} (R)$ and $\DD^{b} (\mod k[Q])$
    are canonically identified via the triangulated equivalence in \cref{prop:StartingPoint}.
    Therefore, \cref{prop:StartingPoint,prop:MCMOrbit} yield
    \[ \DD_{sg} (\widehat{R}) \simeq \sMCM(\widehat{R}) \simeq \sMCM^{\ZZ} (R) / (-1) \simeq \DD^{b} (\mod k[Q]) / \tau. \]
    Therefore, it suffices to show that the orbit category $\DD^{b} (\mod k[Q]) / \tau$ is standard.
    Since the Auslander--Reiten quiver of $\DD^{b} (\mod k[Q]) / \tau$ is $(\ZZ Q / \tau, \id)$
    by \cite[Proposition~1.3]{MR2249625},
    we need to show that $\ind (\DD^{b} (\mod k[Q]) / \tau) \simeq k(\ZZ Q / \tau, \id)$.
    Note that $\ind (\DD^{b} (\mod k[Q])) \simeq k(\ZZ Q, \tau)$ by \cite[p54]{MR0935124},
    and we identify these categories.
    Let $\pi \colon k(\ZZ Q, \tau) \to k(\ZZ Q / \tau, \id)$ be the functor
    induced by the canonical projection $\ZZ Q \to \ZZ Q / \tau$.
    Then by \cref{rem:OrbitUniv}, the equality $\pi \circ \tau = \pi$ yields
    a unique functor $\phi \colon \ind (\DD^{b} (\mod k[Q])) / \tau \to k(\ZZ Q / \tau, \id)$ such that
    \begin{align*}
        \begin{tikzcd}[ampersand replacement=\&]
            \ind (\DD^{b} (\mod k[Q])) \ar{r}{} \ar[phantom]{dr}{\circlearrowleft} \&
                \ind (\DD^{b} (\mod k[Q])) / \tau \ar{d}{\phi} \\
            k(\ZZ Q, \tau) \ar[']{r}{\pi} \ar[equal]{u}{} \&
                k(\ZZ Q / \tau, \id)\rlap{.}
        \end{tikzcd}
    \end{align*}
    The canonical projection $\ZZ Q \to \ZZ Q / \tau$ is a covering in the sense of \cite[1.3~Definition]{MR0643558},
    and hence \cite[ 3.1 Examples (a)]{MR0643558} implies that
    $\phi \colon \ind (\DD^{b} (\mod k[Q])) / \tau \to k(\ZZ Q / \tau, \id)$ is fully faithful.
    This functor being essentially surjective by the construction, we obtain that
    \[ \ind (\DD^{b} (\mod k[Q])) / \tau \simeq k(\ZZ Q / \tau, \id). \]
    On the other hand by \cref{lem:OrbitBasic},
    \[ \ind (\DD^{b} (\mod k[Q])) / \tau \simeq \ind (\DD^{b} (\mod k[Q]) / \tau). \]
    Combining these equivalences, we get the desired result.

    \emph{Step 2.} Next, we consider the \emph{only if} implication.
    Assume that the singularity category $\DD_{sg} (\widehat{S} / \exgen{f})$ is standard
    and let $\widehat{S} / \exgen{g}$ be the rational double point of type $T_n^0$.
    Since the singularity category $\DD_{sg} (\widehat{S} / \exgen{g})$ is standard by Step 1,
    it follows that \cref{rem:Std} and \cref{prop:Recover} yield the triangulated equivalence
    \[ \DD_{sg} (\widehat{S} / \exgen{f}) \simeq \DD_{sg} (\widehat{S} / \exgen{g}). \]
    Combining this with \cref{thm:LongWork}, we get $\widehat{S} / \exgen{f} \cong \widehat{S} / \exgen{g}$.
\end{proof}